\newtheorem{thm}{Theorem}
\newtheorem{lemma}[thm]{Lemma}
\newtheorem{defi}[thm]{Definition}
\newtheorem{prop}[thm]{Proposition}
\newtheorem{rk}[thm]{Remark}
\newcommand{\rr}{{\mathbb{R}}}
\newcommand{\nn}{{\mathbb{N}}}
\newcommand{\rdN}{{(\rr^3)^N}}
\newcommand{\D}{{\mathbb{D}}}
\newcommand{\E}{{\mathbb{E}}}
\newcommand{\PP}{{\mathbb{P}}}
\newcommand{\QQ}{{\mathbb{Q}}}
\newcommand{\MM}{{\mathbb{M}}}
\newcommand{\Sp}{{\mathbb{S}}}
\newcommand{\intrd}{\int_{\rr^3}}
\newcommand{\cL}{\mathcal{L}}
\newcommand{\Q}{\mathbf{Q}}
\newcommand{\cP}{\mathcal{P}}
\newcommand{\cA}{\mathcal{A}}
\newcommand{\cF}{\mathcal{F}}
\newcommand{\e}{\varepsilon}
\newcommand{\vip}{\vskip.15cm}
\newcommand{\indiq}{{\bf 1}}
\newcommand{\dd}{{\rm d}}
\newcommand{\be}{\mathbf{e}}
\newcommand{\bV}{\mathbf{V}}
\newcommand{\bZ}{{\mathbf{Z}}}
\newcommand{\bi}{\mathbf{i}}
\newcommand{\bj}{\mathbf{j}}
\newcommand{\sm}{{s-}}
\newcommand{\um}{{u-}}
\newcommand{\wto}{\rightharpoonup}
\newcommand{\beqn}{\begin{equation}}
\newcommand{\eeqn}{\end{equation}}
\newcommand{\bear}{\begin{eqnarray}}
\newcommand{\eear}{\end{eqnarray}}
\newcommand{\bean}{\begin{eqnarray*}}
\newcommand{\eean}{\end{eqnarray*}}
\newcommand{\bal}{\begin{aligned}}
\newcommand{\eal}{\end{aligned}}
\numberwithin{equation}{section}
\numberwithin{thm}{section}
\begin{document}

\title[Propagation of chaos for moderately soft potentials]{Propagation of chaos for the homogeneous 
Boltzmann equation with moderately soft potentials}

\author{Nicolas Fournier and Stéphane Mischler}
\address{N. Fournier: Sorbonne Universit\'e, LPSM-UMR 8001, Case courrier 158, 75252 Paris Cedex 05, France.}
\address{S. Mischler: CEREMADE, CNRS UMR 7534,
Universités PSL et Paris-Dauphine, Place de Lattre de Tassigny, 75775 Paris 16, France et Institut
Universitaire de France (IUF)}
\email{nicolas.fournier@sorbonne-universite.fr}
\email{mischler@ceremade.dauphine.fr}
\subjclass[2020]{82C40, 60K35, 65C05}

\keywords{Kinetic theory, Stochastic particle systems, Propagation of Chaos, Fisher information}

\begin{abstract}
We show that the Kac particle system converges, as the number of particles  tends to infinity,
to the solution of the homogeneous Boltzmann equation, in the regime of moderately soft potentials,
$\gamma \in (-2,0)$ with the common notation.  This proves the propagation of chaos. We adapt the recent
work~\cite{imbertSV2024} of Imbert, Silvestre and Villani, to show that the Fisher information is 
nonincreasing in time along solutions to the Kac master equation. This estimate allows us to control 
the singularity of the interaction.
\end{abstract}

\maketitle

\section{Introduction and main results}

\subsection{The homogeneous Boltzmann equation}
The homogeneous Boltzmann equation  describes the evolution of the density $f=f(t,v)=f_t(v) \ge 0$ 
of particles with velocity $v\in \rr^3$ at time 
$t\geq 0$ in a spatially homogeneous dilute gas. It writes
\begin{equation}\label{be}
 \partial_t f = Q(f,f)    \;\;\; \text{in}\;\;\; (0,\infty) \times \rr^3 ,\quad
f(0,\cdot) = f_0   \;\;\; \text{in}\;\;\; \rr^3, 
\end{equation}
where the collision operator is given by 
\begin{eqnarray} \label{beQ}
Q(f,f) (v) :=  \intrd \dd v_* \int_{\Sp_{2}} \dd\sigma B(v-v_*,\sigma)
\big[f(v')f(v'_*) -f(v)f(v_*)\big],
\end{eqnarray}
and where in the above formula the pre-collisional velocities are given by
\begin{equation}\label{vprimeetc}
v'=v'(v,v_*,\sigma)=\frac{v+v_*}{2} + \frac{|v-v_*|}{2}\sigma, \quad 
v'_*=v'_*(v,v_*,\sigma)=\frac{v+v_*}{2} -\frac{|v-v_*|}{2}\sigma.
\end{equation}
The cross section takes the form
\begin{equation}\label{hyp0}
B(z,\sigma)\, \sin \theta = \alpha(|z|) \beta(\theta)
\end{equation}
for some $\alpha :\rr_+\to \rr_+$ and some $\beta : (0,\pi]\to \rr_+$,
where $\theta=\theta(z,\sigma)$ is defined by 
$\cos \theta = \frac{z}{|z|} \cdot \sigma$.

\vip
We will always assume that this cross section corresponds to the situation where particles repel 
each other according to a force proportional to $1/r^s$, 
with $s\in (2,\infty)$.  In that case, we have
\begin{equation}\label{hyp1}
\alpha(r)=r^\gamma \;\;\text{and}\;\; c_1 \theta^{-1-\nu}\leq \beta(\theta)\leq c_2 \theta^{-1-\nu},
\;\; \hbox{with}\;\; \gamma=\frac{s-5}{s-1}\in (-3,1) \;\;\text{and}\;\; \nu=\frac{2}{s-1}
\end{equation} 
for some constants $c_2>c_1>0$. 
One speaks of hard spheres when $\gamma=1$, hard potentials when $\gamma\in (0,1)$, 
Maxwell molecules when $\gamma=0$ and soft potentials when $\gamma\in (-3,0)$.
The present paper is concerned with moderately soft potentials, i.e. $\gamma \in (-2,0)$. 

\vip
The main physical features of the Boltzmann equation is that any solution conserves mass, 
momentum and kinetic energy and that its entropy is decaying. 
From Galilean invariance, we may (and will) always normalize the initial datum, and we 
thus have
\beqn\label{eq:normalizationf}
\int_{\rr^3} f_t(v) \dd v=1, \quad \int_{\rr^3} f_t(v) v  \dd v=0,  \quad \int_{\rr^3} f_t(v) |v|^2  \dd v=3, 
\quad \forall \, t \ge 0, 
\eeqn
as well as the entropy bound
$$
H(f_t) := \int_{\rr^3} f_t(v) \log f_t(v)  \dd v \le H(f_0), \quad \forall \, t \ge 0. 
$$

In a recent paper~\cite{guillenSilvestre202}, Guillen and Silvestre have established
that the Fisher information 
\beqn\label{eq:Fisher1}
I_1(f) := \int_{\rr^3} \frac{|\nabla f(v)|^2}{f(v)} \dd v
\eeqn
is nonincreasing in time along the solutions of the Landau equation. This was extended by Imbert, Silvestre 
and Villani in~\cite{imbertSV2024} to the Boltzmann equation~\eqref{be}, for any 
physical kernels~\eqref{hyp0}-\eqref{hyp1}. 
Although such a property result was known for Maxwell molecules, 
see McKean~\cite{MR214112}, Toscani~\cite{MR1164495} and Villani~\cite{MR1646804}, 
it was rather unexpected for other kernels.
This remarkable result and the possibility of extending it 
to the Kac particle system are the starting point and a cornerstone argument in the present work. 
 
\subsection{The Kac particle systems}
\label{subsec:KacSystem}
Kac~\cite{MR84985} introduced a stochastic particle system, expected to approximate the Boltzmann equation.
This system is composed of a large number $N$ of particles characterized by their 
velocities $v_1,\dots,v_N$ and such that 
each couple of particles with velocities $(v_i,v_j)$ randomly collides, for each $\sigma\in\Sp_2$, 
at rate $\frac{B(v_i-v_j,\sigma)}{2(N-1)}$ and are then replaced by particles with velocities 
$v'(v_i,v_j,\sigma)$ and $v'_*(v_i,v_j,\sigma)$.
Equivalently, the Kac particle system is a $\rdN$-valued Markov process $(\bV^N_t)_{t\geq 0}$
with generator $\cL_N$ defined (at least informally) by
\begin{equation*}
\cL_N \phi(v)= \frac 1 {2(N-1)} \sum_{1\leq i \neq  j \leq N} \int_{\Sp_2} 
[\phi(v'_{ij}) - \phi(v)]
B(v_i-v_j,\sigma)\dd\sigma,
\end{equation*}
for any $v=(v_1,\dots,v_N) \in \rdN$ and any $\phi:\rdN\mapsto \rr$ sufficiently regular.
Here, we have set
\begin{equation}\label{vpij}
v'_{ij}=v'_{ij}(v,\sigma)= v + (v'(v_i,v_j,\sigma)-v_i)\be_i +  (v'_*(v_i,v_j,\sigma)-v_j)\be_j \in \rdN,
\end{equation}
where for $h \in \rr^3$, $h\be_i=(0,\dots,0,h,0,\dots,0)$ with $h$ at the $i$-th place.

\vip
It turns out that the law  $F^N_t$ of $\bV^N_t$ satisfies the $N$-particle linear Boltzmann 
equation, also called Kac master equation, 
\beqn\label{eq:BlNeq}
\partial_t F^N_t= \cL_N F^N_t   \;\;\; \text{in}\;\;\; (0,\infty) \times (\rr^3)^N ,\quad
F^N(0,\cdot) = F^N_0   \;\;\; \text{in}\;\;\; (\rr^3)^N.
\eeqn
That equation has the same main physical properties as the Boltzmann equation since its
solutions also conserve mass, momentum and kinetic energy and have a decaying entropy. 
As alluded to above, we will establish that the $N$-particle Fisher information 
\beqn\label{eq:FisherN}
I_N(F)=\frac 1 N \int_{\rdN} \frac{|\nabla F(v)|^2}{F(v)} \dd v
\eeqn
is nonincreasing along solutions of~\eqref{eq:BlNeq}.

\subsection{Propagation of chaos} \label{subsec:propgation-chaos}
We recall some more or less classical notions of chaos which have been introduced 
in Kac's paper~\cite{MR84985} and later in Sznitman~\cite{MR1108185}, Carlen et al.~\cite{MR2580955}
and in~\cite{MR3188710}, 
to which we refer for more details.

\vip
For a Polish space $E$, we denote by $\cP(E)$ the set of probability measures on $E$. 
We say that  $G^N \in \cP(E^N)$ is symmetric (and write $G^N \in \cP_{\! s}(E^N)$) if 
$\langle G^N,\varphi_\sigma\rangle = \langle G^N,\varphi\rangle$ for any $\varphi \in C_b(E^N)$ and any 
permutation $\sigma$ of $\{1,\dots,N\}$, where 
$\varphi_\sigma(Z) := \varphi(Z_{\sigma(1)},\dots,Z_{\sigma(N)})$  for any $Z \in E^N$. 
For $1 \le \ell \le N$, we define the $\ell$-th marginal $G^N_\ell \in \cP(E^\ell)$
 by $\langle G^N_\ell, \varphi \rangle := \langle G^N, \varphi \otimes {\bf 1}^{N-\ell} \rangle$, 
for any $\varphi \in C_b(E^\ell)$. 
For $g \in \cP(E)$, we say that a sequence  $(G^N)_{N\geq 1}$ of $\cP(E^N)$
is (weakly) $g$-chaotic if 
$$
G^N_\ell \wto g^{\otimes \ell} \ \hbox{ weakly in } \ \cP(E^\ell) \ \hbox{ as } \ N \to \infty,
$$
for any fixed $\ell \ge 1$ (or equivalently for at least one fixed $\ell \ge 2$). 

\vip

When $E = \rr^3$,  we define the $N$-particle entropy of $G^N \in \cP_{\!s}((\rr^3)^N)$ such that 
$m_2(G^N) := \langle G^N,|v_1|^2 \rangle< \infty$ by
$$ 
H_N(G^N) := \frac1N  \int_{\rr^{3N}} G^N(v) \log G^N(v) \dd v.
$$
Consider now a sequence of symmetric probability measures $(G^N)_{N\geq 1}$ with $\sup_{N\geq 1}m_2(G^N)<\infty$.
We say that  $(G^N)_{N\geq 1}$ is entropically $g$-chaotic if  
$$
G^N_1 \wto g \hbox{ weakly in } \cP(\rr^3) \hbox{ and } H_N(G_N) \to H(g). 
$$

A $E^N$-valued random vector $\bZ^N$ is said to be exchangeable if its law is symmetric. 
For any $Z = (z_1, ...,z_N) \in E^N$, we define the empirical measure 
\beqn\label{def:EmpirMeasure}
\mu^N_Z := \frac1N \sum_{i=1}^N \delta_{z_i} \in \cP(E).
\eeqn
Consider a family $(\bZ^N)_{N\geq 1}$ of exchangeable random vectors, with  $\bZ^N$ valued in $E^N$.
For $g \in \cP(E)$,  we say that $(\bZ^N)_{N\geq 1}$ is $g$-chaotic if its sequence of laws is $g$-chaotic, 
or equivalently, see~\cite[Proposition~2.2]{MR1108185},
if the associated  $\cP(E)$-valued random variable  $\mu^N_{\bZ^N} $ converges 
to the deterministic random variable $g$:
\beqn\label{defKacProba} 
\mu^N_{\bZ^N}  \, \Rightarrow \, g \quad \hbox{in probability as} \quad N\to\infty. 
\eeqn
Here $\Rightarrow$ refers to the weak convergence of measures (tested against $C_b(E)$-functions).

\vip

Similarly, when $E=\rr^3$, we say that $(\bZ^N)_{N\geq 1}$ is 
entropically $g$-chaotic if the associated sequence of laws 
is entropically $g$-chaotic.

\subsection{Main result}

We are now in position to formulate the main results of the paper. 

\begin{thm}\label{th:chaos}
Consider a kernel $B$ satisfying~\eqref{hyp0}-\eqref{hyp1} with $\gamma \in (-2,0)$ 
and a nonnegative initial condition $f_0$ satisfying the normalization 
$$
\int_{\rr^3} f_0(v) \dd v=1, \quad \int_{\rr^3} f_0(v) v  \dd v=0,  \quad \int_{\rr^3} f_0(v) |v|^2  \dd v=3, 
$$
  and with finite Fisher information. Denote by $f \in C([0,\infty);L^1(\rr^3))$ the unique weak solution
to the Boltzmann equation~\eqref{be} with nonincreasing Fisher information and which 
which satisfies~\eqref{eq:normalizationf}.
For any $N\geq 2$, there exists
a $\rdN$-valued Kac particle system $(\bV^N_t)_{t\geq 0}$ with initial law $f_0^{\otimes N}$ 
and for all $t\geq 0$, 
$(\bV^N_t)_{N\geq 2}$ is entropically $f_t$-chaotic.
\end{thm}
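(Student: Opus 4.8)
The plan is to combine four ingredients: the propagation of the $N$-particle Fisher information, a tightness/compactness argument for the empirical measures, the uniqueness of the limiting Boltzmann solution, and an entropy-convergence upgrade. First I would set up the Kac particle system $(\bV^N_t)_{t\ge0}$ with initial law $f_0^{\otimes N}$; since $f_0$ has finite Fisher information $I_1(f_0)$, by tensorization $I_N(f_0^{\otimes N}) = I_1(f_0) < \infty$, and similarly $H_N(f_0^{\otimes N}) = H(f_0)$ and $m_2(f_0^{\otimes N})=3$. The cornerstone is to prove that $t \mapsto I_N(F^N_t)$ is nonincreasing along solutions of the Kac master equation~\eqref{eq:BlNeq}; I would do this by adapting the argument of Imbert--Silvestre--Villani~\cite{imbertSV2024}, exploiting that $\cL_N$ is, up to the mean-field normalization, a sum of two-body Boltzmann collision operators acting on a symmetric density, and that the key commutator/convexity inequalities underlying the monotonicity of $I_1$ are stable under this lifting to $(\rr^3)^N$. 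One must be slightly careful with regularization: work with a mollified/truncated kernel $B_\e$, establish the monotonicity there, and pass to the limit $\e\to0$ using lower semicontinuity of the Fisher information.

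With $\sup_{t,N} I_N(F^N_t) \le I_1(f_0)$ in hand, the marginals $F^N_{t,\ell}$ enjoy uniform-in-$N$ Fisher information bounds (again by a tensorization/superadditivity-type inequality for $I$ on marginals), hence uniform $L^1\cap$ weighted-$L^p$ and equi-integrability estimates; together with the uniform second-moment bound (conserved by the dynamics) this gives strong compactness of $(F^N_{t,1})_N$ in $L^1(\rr^3)$ for each fixed $t$, and more generally tightness of the laws of the empirical measures $\mu^N_{\bV^N_\cdot}$ in $\cP(C([0,T];\cP(\rr^3)))$ or in the relevant Skorokhod space. The Fisher information control is exactly what tames the singularity $|z|^\gamma$ with $\gamma\in(-2,0)$: since finite Fisher information controls $f$ in $L^3(\rr^3)$ (Sobolev), the quantities $\int f(v)f(v_*)|v-v_*|^\gamma\,\dd v\,\dd v_*$ and the associated martingale brackets are uniformly bounded for $\gamma>-2$, so the generator applied to smooth test functions passes to the limit. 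A standard martingale argument then identifies every subsequential limit point as (the law of) a weak solution of the Boltzmann equation~\eqref{be} satisfying~\eqref{eq:normalizationf} and, by Fatou/lower semicontinuity, with nonincreasing Fisher information. By the assumed uniqueness of such a solution, the limit is the deterministic trajectory $f=(f_t)_{t\ge0}$, which yields $\mu^N_{\bV^N_t}\Rightarrow f_t$ in probability, i.e.\ $f_t$-chaoticity for each $t$.

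To upgrade weak chaos to \emph{entropic} chaos, I would use the now-classical route (Hauray--Mischler, as in~\cite{MR3188710}): weak chaos of $(\bV^N_t)_N$ together with convergence of the entropies $H_N(F^N_t)\to H(f_t)$. The lower bound $\liminf_N H_N(F^N_t)\ge H(f_t)$ follows from weak lower semicontinuity of entropy under chaos. For the matching upper bound I would invoke the $H$-theorem for the Kac master equation, which gives $H_N(F^N_t)\le H_N(F^N_0)=H(f_0)$, combined with the $H$-theorem for the Boltzmann equation and the fact that along the true solution $H(f_t)$ decreases; more precisely, one shows $\limsup_N H_N(F^N_t)\le H(f_t)$ by a comparison argument exploiting that the entropy production functionals also converge (this is where the uniform Fisher information bound again helps, via strong $L^1$ convergence of marginals which makes $G^N_1\log G^N_1$ uniformly integrable). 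Combining the two bounds gives $H_N(F^N_t)\to H(f_t)$, hence entropic $f_t$-chaos.

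The main obstacle I expect is the first step: pushing the Imbert--Silvestre--Villani monotonicity of the Fisher information from the single Boltzmann equation to the Kac master equation on $(\rr^3)^N$ with the correct, $N$-independent constant. The delicate points are (i) justifying all the integrations by parts and exchanges of limits for the singular kernel with only $W^{1,2}$-type regularity on $F^N$, which forces a careful regularization scheme; (ii) checking that the geometry of binary collisions embedded in $3N$ dimensions does not produce extra error terms, i.e.\ that the dimension-reduction to the two-particle subspace is exact; and (iii) keeping every estimate uniform in $N$ so that the normalization $\tfrac1N$ in $I_N$ and $H_N$ yields bounds that survive the mean-field limit. Everything downstream — compactness, identification of the limit, uniqueness, entropy convergence — is then comparatively routine given the cited results.
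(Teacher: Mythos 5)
Your architecture matches the paper's: Fisher decay for the Kac master equation via Imbert--Silvestre--Villani, uniform-in-$N$ bounds, tightness, martingale identification of limit points, uniqueness, and an entropic upgrade. But there are two genuine gaps in the identification and in the final step.

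The first gap is your claim that every subsequential limit point is a weak solution ``with nonincreasing Fisher information, by Fatou/lower semicontinuity''. The limit point $\mu$ of the empirical measures is a priori a \emph{random} measure, and lower semicontinuity of the Fisher information along the convergence of empirical measures (Lemma~\ref{pfish}-(f)) only yields $\sup_{t\ge 0}\E[I_1(\mu_t)]\le I_1(f_0)$. This gives $t\mapsto I_1(\mu_t)\in L^1_{loc}([0,\infty))$ almost surely, but it does \emph{not} give that $t\mapsto I_1(\mu_t)$ is a.s.\ nonincreasing, nor even a.s.\ locally bounded — the paper flags exactly this point in its strategy section. Consequently you cannot invoke uniqueness in the class of solutions with nonincreasing Fisher information; you need a uniqueness theorem valid in the larger class of weak solutions with merely time-integrable Fisher information. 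This is what the paper supplies via Theorem~\ref{thm:exist&uniqBoltz} and Proposition~\ref{unimp}, resting on the $L^1_{loc}(\rr_+,L^p)$ uniqueness of~\cite{MR2398952} together with the Sobolev bound $\|f\|_{L^3}\le C_0 I_1(f)$; it is precisely here that the restriction $\gamma\in(-2,0)$ enters ($3>3/(3+\gamma)$). Without this replacement your identification of the limit as the deterministic solution $f$ does not close.

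The second gap is the mechanism you propose for the entropic upgrade. The $H$-theorem gives $H_N(F^N_t)\le H(f_0)$, and $H(f_t)\le H(f_0)$, but these do not combine into $\limsup_N H_N(F^N_t)\le H(f_t)$; and a ``comparison argument exploiting that the entropy production functionals also converge'' is not available for these singular kernels — establishing convergence of entropy dissipation is essentially as hard as the problem itself. The correct tool, which the paper uses, is~\cite[Theorem~1.4]{MR3188710}: weak chaos together with a uniform bound on $I_N(F^N_t)$ (which you do have, from the Fisher decay and $I_N(f_0^{\otimes N})=I_1(f_0)$) directly implies entropic chaos, with no entropy-production comparison needed. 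A smaller but nontrivial point you should also address: to pass from trajectorial chaos to $f_t$-chaoticity at each \emph{fixed} time $t$, one needs the time-marginal projection to be a.s.\ continuous at the limit law, which requires showing that $\QQ$ has no fixed times of discontinuity (the paper obtains this from the Aldous criterion and quasi-left-continuity).
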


Precise definitions of the objects involved in this theorem as well as more accurate or more general 
statements will be given in the next sections. In particular, we prove propagation of chaos in the
sense of trajectories, see Theorem~\ref{mr2}.
The main novelty is that we are able to deal with moderately soft potentials.

\vip

Some propagation of chaos results of Kac's system to the Boltzmann equation have been
obtained for Maxwell molecules by Kac~\cite{MR84985}, Graham-Méléard~\cite{MR1428502},
Desvillettes-Graham-Méléard~\cite{MR1720101}, see also~\cite{MR3069113} and 
Cortez-Fontbona~\cite{MR3769742}. Hard spheres (and hard potentials with angular cutoff) were
treated by Grünbaum~\cite{MR334788}, Sznitman~\cite{MR753814} and Norris~\cite{norris}.
Heydecker~\cite{MR4419606} was able to deal with hard potentials $\gamma \in (0,1)$ without cutoff.
To our knowledge, the case of (singular) soft potentials was left completely open.

\vip
It is worth mentioning a slightly different particle system, the so-called Nanbu approximation. 
This system is somehow easier to manage than Kac's system, but it has
less pertinent physical meaning, since the collisions are not symmetric.
For this system, propagation of chaos has been established in 
Nanbu~\cite{MR722216}, Graham-Méléard~\cite{MR1428502}, Xu~\cite{MR3784497}, see also~\cite{MR3456347}. 
Together, these works cover the range $\gamma \in (-1,1]$.
\vip
 We also refer to the works of Fontbona, Guérin and Méléard~\cite{MR2475665}, Carrapatoso~\cite{MR3422644},
see also~\cite{MR3304746,MR3572320,MR3621429} and the references therein for some propagation of chaos
of the Kac system (when $\gamma \in [0,1]$) and of the Nanbu system (when $\gamma \in (-2,0]$) 
to the Landau equation. Let also mention that the convergence of subsequences to the hierarchy, 
for the very singular Coulomb potential 
$\gamma=-3$, has been established by Miot, Pulvirenti and Saffirio~\cite{MR2765750}, 
and more recently by Carrillo and Guo~\cite{carrillo2025FisherLandauSystem}. Finally, propagation of chaos for
Kac's system to the Landau equation 
has been established by Tabary~\cite{tab} and Feng-Wang~\cite{fw} 
for very soft potentials and the Coulomb case ($\gamma \in [-3,-2)$). Let us also mention the work of 
Du~\cite{kaidu}, who derives the Landau equation (with $\gamma \in [-3,1]$) from the Kac particle system 
associated to a Boltzmann equation with a specific collison kernel in the asymptotics of grazing collisions.

\vip 
Our proof follows a general strategy introduced by Sznitman~\cite{MR753814}, revisited for singular interactions 
in~\cite{MR3254330} in the context of the vortex system. 

\vip
The first key argument is to establish the 
decay of the Fisher information along the solutions to~\eqref{eq:BlNeq}, 
following the ideas of the recent papers dedicated to the Landau and Boltzmann equations 
of Guillen and Silvestre~\cite{guillenSilvestre202},
of Imbert, Silvestre and Villani~\cite{imbertSV2024} and of Villani~\cite{villani2025}.
Let us also mention that the works~\cite{carrillo2025FisherLandauSystem,tab} cited above
rely on the very same Fisher information decay, derived for the Kac particle system corresponding to
the Landau-Coulomb equation.
\vip
We thus get a uniform bound on the $N$-particle Fisher information. Next, 
following~\cite{MR3254330},  this uniform bound
makes possible to pass to the limit as $N \to \infty$ and to prove that $(\bV^N_t)_{t\geq 0}$ converges in law
to a nonlinear stochastic process $(\bV_t)_{t\geq 0}$, of  which the law is supported, roughly speaking,  
on solutions to the Boltzmann equation. It turns out that, using some level-$3$ Fisher information 
results picked up from~\cite{MR3188710}, the Fisher information of these solutions 
are integrable in time, so that we may use the uniqueness result established in~\cite{MR2398952} 
in order to conclude.
\vip

\begin{rk}
For very soft potentials $\gamma \in (-3,2]$, 
we can prove, with almost the same arguments, points (i) and (ii) of Theorem~\ref{mr2}
below, i.e. roughly speaking, some compactness result for the particle system and that any (possibly random) 
limit point a.s. solves~\eqref{be}. However, we have not sufficiently regularity information on this limit point
to apply the uniqueness result of~\cite{MR2398952}.
Since such a result is rather weak and since this would complicate the presentation, 
in particular due to the assumptions of~\cite{imbertSV2024} on the cross section, which cover
all physical cases but are complicated to write down when $\gamma \in (-3,-2]$, 
we decided to restrict our study to
the case $\gamma \in (-2,0)$.
\end{rk}

\subsection{Strategy of the proof}

As explained above, our proof makes use of
(a) the results of Imbert-Villani-Silvestre~\cite{imbertSV2024} about the decay of the Fisher information for
the Boltzmann equation (that we extend to the Kac particle system), 
(b) the tools developed in~\cite{MR3254330} and~\cite{MR3188710}  to control
the singularity of the interaction by the Fisher information (in the context of continuous processes), 
(c) some ideas taken in Sznitman~\cite{MR1108185,MR753814} and
Méléard~\cite{MR1431299} about the propagation of chaos for càdlàg processes (with non-singular interactions), 
and (d) the uniqueness result of~\cite{MR2398952}, which implies that there is at most one weak solution to the 
Boltzmann equation with time-integrable Fisher information when $\gamma \in (-2,0)$.

\vip

As we shall see, something quite disappointing is that the bound 
$\sup_{N\geq 2} \sup_{t\geq 2} I_N(F^N_t)<\infty$
shows that for $(\mu_t)_{t\geq 0}$ a (random) limit point of the empirical measure of the Kac
particle system, we have $\sup_{t\geq 0} \E[I_1(\mu_t)]<\infty$. This of course implies that
$t\mapsto I_1(\mu_t) \in L^1_{loc}([0,\infty))$ a.s. 
but not that $t\mapsto I_1(\mu_t) \in L^\infty_{loc}([0,\infty))$ a.s.
Such a bound would imply that $(\mu_t)_{t\geq 0} \in L^\infty(\rr_+,L^{3}(\rr^3))$. Although the situation is much less
clear for the Boltzmann equation, such an estimate would be sufficient to have uniqueness in the case of 
the Landau equation in the Coulomb case, see Chern-Gualdani~\cite{MR4557015} and 
Golding-Gualdani-Loher~\cite{MR4904566}. 
\vip

Tabary~\cite{tab} manages to treat the Landau equation when $\gamma \in [-3,0)$.
He is able to take advantage of the Fisher 
information dissipation to show that
any limit point $(\mu_t)_{t\geq 0}$ of the empirical measure actually lies in $L^1_{loc}(\rr_+,L^\infty(\rr^3))$,
which allows him to apply the uniqueness result of~\cite{funic}.
Such a program might be feasible for the Boltzmann equation for very soft potentials $\gamma \in (-3,-2]$,
with much more involved  computations.

\subsection{Plan of the paper} Section~\ref{sec:Fisher} is concerned with the Fisher information for the 
$N$-particle linear Boltzmann equation: we establish its decay in time and deduce some useful consequences. 
In Section~\ref{sec:Boltzmann}, we recall some results about the existence and uniqueness for the Boltzmann 
equation. Section~\ref{sec:Kac} is devoted to the Kac $N$-particle system: we state an existence result 
and describe its main properties.
In Section~\ref{sec:NonlinearStochastic}, we introduce a nonlinear stochastic process associated 
with the Boltzmann
equation, as initiated by Tanaka~\cite{MR512334} for Maxwell molecules.
In Section~\ref{sec:KACtoNL}, we prove that this  nonlinear 
stochastic process is the limit of the $N$-particle system.
Finally, we conclude the proof of Theorem~\ref{th:chaos} in  Section~\ref{sec:propagation}.
 
\section{The Fisher information}\label{sec:Fisher}

The goal of this section is mainly to show the time-decay of the Fisher information for the $N$-particle system.
This relies on the following result, found in the article of Imbert, Silvestre and Villani~\cite{imbertSV2024}.

\begin{lemma}\label{isv}
Assume~\eqref{hyp0}, with $\int_0^\pi \theta^2\beta(\theta)\dd \theta <\infty$ and with
$\alpha(r)=(\e^2+r^2)^{\gamma/2}$ for some $\e \geq 0$ 
and some $\gamma \in (-2,0)$. 
For $F:(\rr^3)^2 \to \rr_+$ and $v_1,v_2\in \rr^3$, set
$$
\Q(F)(v_1,v_2)=\int_{\Sp_2} [F(v'(v_1,v_2,\sigma),v'_*(v_1,v_2,\sigma))-F(v_1,v_2)] B(v_1-v_2,\sigma)\dd \sigma.
$$ 
For any function $F:(\rr^3)^2 \to \rr_+$ which is symmetric, that is $F(v_1,v_2)=F(v_2,v_1)$, it holds
\begin{equation}\label{isvin}
\int_{(\rr^3)^2} \Big(\frac{2\nabla_1 F \cdot \nabla_1 \Q(F)}{F}-\frac{|\nabla_1 F|^2 \Q(F)}{F^2} \Big) 
\dd v_1\dd v_2 \leq 0,
\end{equation}
where $\nabla_1 F$ is the gradient of $F$ with respect to its first variable $v_1\in \rr^3$.
\end{lemma}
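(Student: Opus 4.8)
The plan is to reduce the $N=2$ Fisher information monotonicity statement to the one-particle computations of Imbert–Silvestre–Villani. The quantity in \eqref{isvin} is, up to normalization, exactly $\frac{d}{dt}\big|_{t=0}$ of $I_1$ evaluated along the flow of the single-collision operator $\Q$ acting on a two-particle density, when one tracks only the $v_1$-gradient. More precisely, if $F_t$ solves $\partial_t F_t = \Q(F_t)$ on $(\rr^3)^2$, then differentiating $\int |\nabla_1 F_t|^2/F_t$ gives precisely the left-hand side of \eqref{isvin} (with an overall factor), since $\partial_t(|\nabla_1 F|^2/F) = 2\nabla_1 F\cdot\nabla_1(\partial_t F)/F - |\nabla_1 F|^2 (\partial_t F)/F^2$. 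So the statement is equivalent to: the ``partial'' Fisher information $v_1 \mapsto \int \frac{|\nabla_1 F|^2}{F}\dd v_1$, integrated against $\dd v_2$, does not increase under the Boltzmann collision flow in the pair $(v_1,v_2)$.

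First I would recall from \cite{imbertSV2024} the inequality as it is stated there for the spatially homogeneous equation: along $\partial_t g = Q(g,g)$ the Fisher information $I_1(g)$ is nonincreasing, and the proof in fact establishes a \emph{pointwise-in-the-second-slot} or bilinear version — the dissipation identity is derived by writing $I_1$ as a quadratic functional of $\nabla\sqrt{g}$ and exploiting the pre-/post-collisional change of variables together with convexity (Cauchy–Schwarz in the collision integral). The key observation is that their argument never uses that the two arguments of $Q$ are the same function; it works for the operator $F\mapsto \Q(F)$ acting on a symmetric two-variable density, where the ``interaction'' happens between the two coordinates. I would therefore extract from \cite{imbertSV2024} the cleanest intermediate inequality — typically of the form $\int \Big(\frac{2\nabla F\cdot\nabla Q - |\nabla F|^2 Q/F}{F}\Big) \le 0$ for the one-dimensional-family collision — and verify it applies verbatim with $\nabla$ replaced by $\nabla_1$ and $Q$ replaced by $\Q$.

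The main steps in order: (1) Write $G = \sqrt{F}$, so the integrand becomes an expression in $\nabla_1 G$ and $\Q(F)$; the bracket in \eqref{isvin} equals $\frac{4}{F}\big(\nabla_1 G\cdot\nabla_1 \Q(F) \cdot \tfrac{1}{2G} - \ldots\big)$ — more usefully, recognize $\frac{2\nabla_1 F\cdot\nabla_1\Q(F)}{F} - \frac{|\nabla_1 F|^2\Q(F)}{F^2}$ as $\nabla_1\cdot\big(\text{flux}\big)$ plus the term one obtains by formally computing $\partial_t\!\int|\nabla_1\log F|^2 F$. (2) Use the representation of $\Q(F)$ via the $\sigma$-integral and the collisional substitution $(v_1,v_2,\sigma)\leftrightarrow(v_1',v_2',\sigma')$, which is an involution with unit Jacobian and preserves $|v_1-v_2|$ hence $B$; this symmetrizes the gain and loss terms. (3) Apply the ISV dissipation lemma in each ``fiber'' — here the point is that $\nabla_1$ commutes with the part of the collision map acting on $v_2$, and the geometry of $v_1'=\frac{v_1+v_2}{2}+\frac{|v_1-v_2|}{2}\sigma$ in the $v_1$ variable alone is exactly the geometry handled in \cite{imbertSV2024}. (4) Conclude by their convexity/Cauchy–Schwarz estimate, which yields the sign.

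The hard part, and the only genuinely non-routine step, will be step (3): checking that the ISV argument is truly insensitive to the presence of the inert second variable $v_2$ — i.e. that their spherical/geometric manipulations (the change of variables on $\Sp_2$, the use of the cancellation lemma, and the lower bound $\int_0^\pi\theta^2\beta\,\dd\theta<\infty$ controlling the singularity) go through fiberwise in $v_2$ without any new boundary or integrability obstruction, and that integrating the resulting pointwise-in-$v_2$ inequality against $\dd v_2$ is legitimate. One must also be careful that the symmetry hypothesis $F(v_1,v_2)=F(v_2,v_1)$ is used in exactly the place ISV use $g(v)g(v_*)$'s symmetry under $(v,v_*)\leftrightarrow(v_*,v)$; I expect this is where symmetry of $F$ enters, and I would isolate that point explicitly. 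The regularization $\alpha(r)=(\e^2+r^2)^{\gamma/2}$ is there precisely to make all integrals absolutely convergent so that the formal differentiation is justified; I would keep $\e>0$ fixed throughout and not attempt to pass $\e\to0$ here (that is deferred to the later application).
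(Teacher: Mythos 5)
Your overall strategy --- reduce to the two-particle dissipation inequality of \cite{imbertSV2024} --- is the same as the paper's, and your key observation that the ISV argument never uses that the two arguments of $Q$ are the same function is exactly right: the paper's proof consists of quoting their inequality (3.3), namely $\langle I_2'(F),\Q(F)\rangle\le 0$ for any symmetric $F$ on $(\rr^3)^2$, and rewriting it (using the symmetry of $F$ to replace the full gradient by $2\nabla_1$). However, your ``hard part'' (step (3)) rests on a misconception: there is no pointwise-in-$v_2$ inequality to integrate against $\dd v_2$, and $v_2$ is not inert --- the collision map sends $(v_1,v_2)$ to $(v_1',v_2')$ with \emph{both} coordinates moving, so a ``fiberwise'' rerun of the one-particle argument with $v_2$ frozen does not even parse, since $\Q(F)(v_1,v_2)$ evaluates $F$ at the displaced second coordinate $v'_*(v_1,v_2,\sigma)$. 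The correct move is precisely \emph{not} to redo the argument but to observe that the intermediate inequality in \cite{imbertSV2024} is already stated and proved at the level of arbitrary symmetric two-variable densities; that citation is the whole content of the lemma.

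You also omit the verification of the hypotheses of \cite[Theorem~1.2]{imbertSV2024}, which is the only nontrivial checking the paper actually performs: one needs $\int_0^\pi(1-\cos\theta)\beta(\theta)\dd\theta<\infty$ (implied by $\int_0^\pi\theta^2\beta(\theta)\dd\theta<\infty$) and, crucially, the condition $r|\alpha'(r)|/\alpha(r)\le 2\sqrt{\Lambda_b}$. For $\alpha(r)=(\e^2+r^2)^{\gamma/2}$ one computes $r|\alpha'(r)|/\alpha(r)=r^2|\gamma|/(\e^2+r^2)\le|\gamma|\le 2\le 2\sqrt{\Lambda_b}$, using $\Lambda_b\ge 1$ from \cite[Proposition~7.11]{imbertSV2024}; this is where the restriction $\gamma\in(-2,0)$ and the precise form of $\alpha$ enter, and it is a structural hypothesis of the ISV theorem, not (as you suggest) merely a device to make the integrals absolutely convergent. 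With these two corrections your plan coincides with the paper's proof.
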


\begin{proof}
Let us check that the assumptions of~\cite[Theorem~1.2]{imbertSV2024} are satisfied. 
First, $b(\cos\theta)=\beta(\theta)$ satisfies~\cite[(1.2)]{imbertSV2024}, 
i.e. $\int_0^\pi (1-\cos\theta) \beta(\theta)\dd \theta<\infty$.
Next, we need to verify that $\frac{r|\alpha'(r)|}{\alpha(r)}\leq 
2\sqrt{\Lambda_b}$, with $\Lambda_b$ defined in~\cite[(1.4)]{imbertSV2024}. 
But, as seen in~\cite[Proposition~7.11]{imbertSV2024}, it holds that $\Lambda_b\geq 1$. Since
$$
\frac{r|\alpha'(r)|}{\alpha(r)}=\frac{r^2 |\gamma| (\e^2+r^2)^{\gamma/2-1}}
{(\e^2+r^2)^{\gamma/2}}= \frac{r^2 |\gamma|}{ \e^2+r^2}\leq |\gamma| \leq 2 \leq 2 \sqrt{\Lambda_b},
$$
all the assumptions of~\cite[Theorem~1.2]{imbertSV2024} are satisfied.
\vip
By~\cite[(3.3)]{imbertSV2024}, which is shown during the proof of~\cite[Theorem~1.2]{imbertSV2024},
that for any symmetric  function $F:(\rr^3)^2 \to \rr_+$, it holds 
$$
\langle I_2'(F), \Q(F)\rangle \leq 0,
$$
where $I_2'(F)$ is the Gâteau derivative of $I_2$, see \eqref{eq:FisherN}. This precisely rewrites as
$$
\frac12\int_{(\rr^3)^2} \Big(\frac{2\nabla F \cdot \nabla \Q(F)}{F}-\frac{|\nabla F|^2 \Q(F)}{F^2} \Big) 
\dd v_1\dd v_2 \leq 0.
$$
The LHS of this inequality equals the LHS of~\eqref{isvin} by symmetry of $F$.
\end{proof}

We can now state the main result of this section, generalizing Carrillo and Guo~\cite{carrillo2025FisherLandauSystem}
which proves a similar result on the Kac's system for the Landau-Coulomb equation.

\begin{prop}\label{prop:decay}
Assume~\eqref{hyp0} with $\int_0^\pi \theta^2\beta(\theta)\dd \theta <\infty$ and 
$\alpha(r)=(\e^2+r^2)^{\gamma/2}$ for some $\e \geq 0$ 
and some $\gamma \in (-2,0)$. 
For any $N\geq 2$, any nonnegative symmetric smooth 
enough solution $F^N$ to the $N$-particle linear Boltzmann equation~\eqref{eq:BlNeq}, 
$t \mapsto I_N(F^N_t)$ is nonincreasing.
\end{prop}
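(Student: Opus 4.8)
The plan is to differentiate $I_N(F^N_t)$ in time and show the derivative is $\le 0$ by reducing to the two-particle estimate of Lemma~\ref{isv}. First I would write
\[
\frac{\dd}{\dd t} I_N(F^N_t) = \langle I_N'(F^N_t), \cL_N F^N_t\rangle,
\]
where the Gâteaux derivative of $I_N$ at a positive density $F$ is the function
$$
I_N'(F) = \frac1N\Big( -2\,\frac{\Delta F}{F} + \frac{|\nabla F|^2}{F^2}\Big),
$$
so that $\langle I_N'(F), G\rangle = \frac1N\int \big(\frac{2\nabla F\cdot\nabla G}{F} - \frac{|\nabla F|^2 G}{F^2}\big)$ after an integration by parts (legitimate for smooth, fast-decaying $F^N_t$, which is where the "smooth enough" hypothesis and decay estimates coming from finite entropy/energy are used). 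The generator $\cL_N$ is a sum over pairs $(i,j)$ of the operators $\Q_{ij}$ acting on the $(v_i,v_j)$ variables; by symmetry of $F^N_t$ all $\binom N2$ terms (or rather all ordered pairs) contribute equally, so
$$
\langle I_N'(F^N_t), \cL_N F^N_t\rangle = \frac{1}{2(N-1)}\cdot\frac1N\sum_{i\ne j}\int_{\rdN}\Big(\frac{2\nabla F^N_t\cdot\nabla(\Q_{ij}F^N_t)}{F^N_t} - \frac{|\nabla F^N_t|^2\,\Q_{ij}F^N_t}{(F^N_t)^2}\Big)\dd v.
$$

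Next I would fix a pair, say $(1,2)$, and peel off the variables $v_3,\dots,v_N$. The operator $\Q_{12}$ only touches $(v_1,v_2)$, and crucially the collision map preserves $v_3,\dots,v_N$, so for fixed $(v_3,\dots,v_N)$ the inner integral over $(v_1,v_2)$ is exactly of the form appearing in \eqref{isvin}, but with two subtleties: (i) the full gradient $\nabla F^N_t = (\nabla_1,\dots,\nabla_N) F^N_t$ contains the components $\nabla_3,\dots,\nabla_N$, which are untouched by $\Q_{12}$; (ii) the first variable of Lemma~\ref{isv} corresponds to $v_1$ only, whereas here the "transversal" gradient $\nabla_2 F^N_t$ also appears. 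For the components $\nabla_k F^N_t$ with $k\ge 3$: since $\Q_{12}$ commutes with $\nabla_k$ and with division arguments only in $(v_1,v_2)$, the terms $\frac{2\nabla_k F\cdot\nabla_k(\Q_{12}F)}{F} - \frac{|\nabla_k F|^2\Q_{12}F}{F^2}$ integrate, over $(v_1,v_2)$, to $\int \Q_{12}\big(\frac{|\nabla_k F|^2}{F}\big)\dd v_1\dd v_2$ (after an integration by parts in $v_1$ or $v_2$ to combine the two pieces into a single perfect expression), and this vanishes because $\Q_{12}$ conserves the integral of any function over the $(v_1,v_2)$-plane (it is a difference of a pre-collisional term and $F$ itself, both integrating to the same thing by the change of variables $(v_1,v_2,\sigma)\mapsto(v_1',v_2',\sigma)$). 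For the $\nabla_1$ and $\nabla_2$ components: by the symmetry $F(v_1,v_2,\dots) = F(v_2,v_1,\dots)$ in the first two slots — which is a consequence of the full exchangeability of $F^N_t$ — the $\nabla_1$ and $\nabla_2$ contributions are equal, so their sum is twice the $\nabla_1$ contribution, which is precisely $2\times$ the left-hand side of \eqref{isvin} applied to the symmetric two-variable function $(v_1,v_2)\mapsto F^N_t(v_1,v_2,v_3,\dots,v_N)$, hence $\le 0$.

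Putting this together, for almost every $(v_3,\dots,v_N)$ the inner integral is $\le 0$, so integrating over $(v_3,\dots,v_N)$ and summing over pairs gives $\frac{\dd}{\dd t} I_N(F^N_t)\le 0$. I expect the main obstacle to be the \emph{rigorous justification} of the formal computation: namely that $F^N_t$ is strictly positive and regular enough (with enough decay) that all integrations by parts are valid, that $I_N(F^N_t)<\infty$ is propagated, and that the time-differentiation under the integral sign is licit. The standard route is to regularize — mollify the initial data, add a small $\e^2$ (as already allowed in the hypothesis via $\alpha(r) = (\e^2+r^2)^{\gamma/2}$) and possibly a vanishing viscosity or a velocity cutoff — prove the inequality for the regularized system where everything is smooth and positive, obtain a uniform-in-regularization bound on $I_N$, and pass to the limit using lower semicontinuity of the Fisher information. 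One must also handle the integrability of $\Q_{ij}F^N_t$ near the collisional singularity $\theta\to 0$ and $|v_i-v_j|\to 0$; here the assumption $\int_0^\pi\theta^2\beta(\theta)\dd\theta<\infty$ together with the $\e$-regularized $\alpha$ and the Fisher information bound itself (which controls the local singularity of $F$, e.g. via $H^1$-type or $L^3$-type control in the relevant variables) should suffice, exactly as in the two-particle analysis of~\cite{imbertSV2024}.
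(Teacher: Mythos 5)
Your overall strategy coincides with the paper's: differentiate $I_N(F^N_t)$, split the generator into pairwise pieces $\cL_N^{ij}$, treat the gradient components attached to the colliding pair via Lemma~\ref{isv} (using exchangeability to reduce the $\nabla_1+\nabla_2$ contribution to twice the $\nabla_1$ contribution, which is exactly~\eqref{isvin}), and handle the transversal components $\nabla_k$, $k\notin\{i,j\}$, separately. The pair components, the bookkeeping over pairs, and the regularization discussion all match the paper (the regularization is carried out in the proof of Theorem~\ref{mr1}, Proposition~\ref{prop:decay} being stated only for smooth solutions).

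The one genuine flaw is your treatment of the transversal components. You claim that, for $k\ge 3$ and writing $\Q_{12}$ for the paper's $\cL_N^{12}$,
$$
\int_{(\rr^3)^2}\Big(\frac{2\nabla_k F\cdot\nabla_k(\Q_{12}F)}{F}-\frac{|\nabla_k F|^2\,\Q_{12}F}{F^2}\Big)\dd v_1\dd v_2
=\int_{(\rr^3)^2}\Q_{12}\Big(\frac{|\nabla_k F|^2}{F}\Big)\dd v_1\dd v_2=0.
$$
The first equality is false, and no integration by parts in $v_1$ or $v_2$ produces it. Writing $F'_{12}=F(v'_{12})$ and $B_{12}=B(v_1-v_2,\sigma)$, a direct computation gives the pointwise identity
$$
\frac{2\nabla_k F\cdot\nabla_k(\Q_{12}F)}{F}-\frac{|\nabla_k F|^2\,\Q_{12}F}{F^2}
=\Q_{12}\Big(\frac{|\nabla_k F|^2}{F}\Big)
-\int_{\Sp_2}\Bigl|\frac{\nabla_k F}{F}\sqrt{F'_{12}}-\frac{\nabla_k F'_{12}}{\sqrt{F'_{12}}}\Bigr|^2B_{12}\,\dd\sigma,
$$
so your expression differs from $\Q_{12}(|\nabla_kF|^2/F)$ by a generically nonzero, nonpositive perfect-square term. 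After integrating over $(v_1,v_2)$ the $\Q_{12}$ part does vanish by the pre/post-collisional change of variables~\eqref{vvvv}, but what survives is the square term: the transversal contribution is $\le 0$, not $=0$. This is exactly the paper's argument in the case $\ell\notin\{i,j\}$; the sign you need comes from the convexity of $(F,\nabla_k F)\mapsto|\nabla_kF|^2/F$, encoded in that completed square, and it must be made explicit. Your error happens to go in the favourable direction, so the final conclusion stands, but as written the step asserts a false identity rather than the inequality that actually carries the proof.
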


\begin{proof}
 A simple computation, recalling the definition of $I_N$
in~\eqref{eq:FisherN}, shows that
$$
\frac{\dd}{\dd t} I_N(F_t^N) = \int_{(\rr^3)^N} \Big(2\frac{\nabla F_t^N(v)\cdot \nabla \partial_t F_t^N(v) }{F_t^N(v)}
-\frac{|\nabla F_t^N(v)|^2 \partial_t F_t^N(v)}{(F_t^N(v))^2}\Big) \dd v.
$$
We claim that
\begin{equation}\label{prop:decay-eq1}
\Delta_N(F):= \int_{(\rr^3)^N} \Big( 2\frac{\nabla F\cdot \nabla \cL_N F}{F} 
- \frac{|\nabla F|^2 \cL_N F}{F^2}\Big)\dd v \leq 0,
\end{equation}
for any (smooth) probability density $F$ on $(\rr^3)^N$,
from what the announced decay follows.  
\vip

From the very definition of $\cL_N$ 
in Section~\ref{subsec:KacSystem}, we may write 
$$
\cL_N = \frac1{2(N-1)} \sum_{1\leq i \neq  j \leq N} \cL_N^{ij}, \quad \text{where} \quad  \cL_N^{ij} F 
:= \int_{\Sp_2} (F'_{ij} - F) B_{ij} \dd\sigma, 
$$
with $F'_{ij} := F(v'_{ij})$, $F = F(v)$, $B_{ij} := B(v_i-v_j, \sigma)$. Thus
$$
\Delta_N(F)=\sum_{\ell=1}^N \int_{(\rr^3)^N} \Big(2\frac{\nabla_\ell F\cdot \nabla_\ell \cL_N F}{F}
- \frac{|\nabla_\ell F|^2 \cL_N F}{F^2}  \Big)\dd v=
\frac1{2(N-1)}\sum_{\ell=1}^N \sum_{1\leq i\neq j\leq N} \Delta_N^{\ell,i,j}(F),
$$
where $\nabla_\ell$ stands for the gradient associated to the variable $v_\ell \in \rr^3$, and where
$$
\Delta_N^{\ell,i,j}(F):=\int_{(\rr^3)^N} \Big(2\frac{\nabla_\ell F\cdot \nabla_\ell \cL_N^{ij} F}{F}
-\frac{|\nabla_\ell F|^2 \cL_N^{ij} F}{F^2}\Big) \dd v.
$$
We now show that each term $\Delta_N^{\ell,i,j}(F)$ is nonpositive.

\vip
{\it The case $\ell \notin \{i,j\}$.} We observe that
$$
\nabla_\ell \cL_N^{ij} F = \int_{\Sp^2} [\nabla_\ell F'_{ij}  - \nabla_\ell F ] B_{ij} \dd\sigma,
$$
with $\nabla_\ell F'_{ij} = (\nabla_\ell F)(v'_{ij})$ and $\nabla_\ell F  = (\nabla_\ell F)(v)$. Consequently,
\bean
\Delta_N^{\ell,i,j}(F)&=&   \int_\rdN \Big( 2\frac{\nabla_\ell F}{F} \cdot  \int_{\Sp^2} 
[\nabla_\ell F'_{ij}  - \nabla_\ell F] B_{ij} \dd\sigma    -  \frac{|\nabla_\ell F|^2}{F^2} 
\int_{\Sp^2} [F'_{ij} -   F] B_{ij} \dd\sigma \Big) \dd v
\\
&=&    \int_\rdN \!\! \int_{\Sp^2} \Big(2 \frac{\nabla_\ell F}{F} \cdot  \nabla_\ell F'_{ij}  - 
\frac{|\nabla_\ell F|^2}{F} - \frac{|\nabla_\ell F|^2}{F^2}  F'_{ij}  \Big)B_{ij} \dd\sigma \dd v.
\eean
As is well-known, see e.g.~\cite[Subsection~2.2 ]{MR1942465}, for any measurable $\varphi:(\rr^3)^2\to \rr_+$,
$$
\int_{(\rr^3)^2} \!\!\int_{\Sp^2} \varphi(v,v_*) B(v-v_*,\sigma) \dd\sigma \dd v_*\dd v
=\int_{(\rr^3)^2} \!\!\int_{\Sp^2} \varphi(v',v_*') B(v-v_*,\sigma) \dd\sigma \dd v_*\dd v.
$$
Consequently, for any measurable $\Phi:\rdN\to \rr_+$, 
\begin{equation}\label{vvvv}
\int_\rdN \!\! \int_{\Sp^2} \Phi(v) B_{ij} \dd\sigma \dd v =
\int_\rdN \!\! \int_{\Sp^2} \Phi(v_{ij}') B_{ij} \dd\sigma \dd v.
\end{equation}
We conclude that 
\bean 
\Delta_N^{\ell,i,j}(F)&=&    \int_\rdN \!\! \int_{\Sp^2} \Big(2 \frac{\nabla_\ell F}{F} \cdot  \nabla_\ell F'_{ij}  - 
\frac{|\nabla_\ell F'_{ij}|^2}{F'_{ij}} - \frac{|\nabla_\ell F|^2}{F^2}  F'_{ij}  \Big)B_{ij} \dd\sigma \dd v
\\
&=&  -  \int_\rdN \!\! \int_{\Sp^2} \Bigl| \frac{\nabla_\ell F}{F} \sqrt{F'_{ij}}   - 
\frac{\nabla_\ell F'_{ij}}{\sqrt{F'_{ij}}} \Bigr|^2  B_{ij} \dd\sigma \dd v \leq 0.
\eean

{\it The case $\ell \in \{i,j\}$.} We e.g. assume that $\ell=i=1$ and $j=2$ and write
\begin{align*}
\Delta_N^{1,1,2} (F)=&\int_{(\rr^3)^N} \Big(2\frac{\nabla_1 F\cdot \nabla_1 \cL_N^{12} F}{F}
-\frac{|\nabla_1 F|^2 \cL_N^{12} F}{F^2}\Big) \dd v\\
=& \int_{(\rr^3)^{N-2}} \Big[\int_{(\rr^3)^2}  
\Big(2\frac{\nabla_1 F_{v_{3N}} \cdot \nabla_1 \Q (F_{v_{3N}})}{F_{v_{3N}}}
-\frac{|\nabla_1 F_{v_{3N}}|^2 \Q (F_{v_{3N}})}{F_{v_{3N}}^2}\Big) \dd v_1 \dd v_2 \Big] \dd v_{3,N}, 
\end{align*}
where $v_{3N}=(v_3,\dots,v_N)$, where $\dd v_{3N}=\dd v_3\dots \dd v_N$, where 
$F_{v_{3N}} : (\rr^3)^2\to\rr_+$ is defined by $F_{v_{3N}}(v_1,v_2)=F(v_1,v_2,v_{3N})$, and where $\Q$ was defined
in Lemma~\ref{isv}. We used that 
$$\cL_N^{12}F(v_1,\dots,v_N)=\Q(F_{v_{3N}})(v_1,v_2).
$$
The integral on $(\rr^3)^2$ is nonpositive for each $v_{3N}$
by Lemma~\ref{isv}, and the proof is complete.
\end{proof}

Here are the properties of the Fisher information (see~\eqref{eq:FisherN}) we will use.

\begin{lemma}\label{pfish}
(a) If $f$ is a probability density on $\rr^3$, then for all $N\geq 1$,
$$
I_N(f^{\otimes N})=I_1(f). 
$$

(b) There is a constant $C_0\in (0,\infty)$ such that for all probability density $f$ on $\rr^3$,
$$
||f||_{L^3(\rr^3)}\leq C_0 I_1(f).
$$

(c) For all $N\geq 2$, all symmetric probability density $F^N$ on $\rdN$, all $k\in \{1,\dots,N\}$,
denoting by $F^N_k$ the $k$-marginal of $F^N$, we have
$$
I_k(F^N_k) \leq I_N(F^N).
$$

(d) For any $a\in (-2,0)$, there is a constant $C_a\in (0,\infty)$ such that for all $N\geq2$, all
symmetric probability density $F^N$ on $\rdN$ and for all $(V^1,\dots,V^N)\sim F^N(v)\dd v$, there holds
$$
\E[|V^1-V^2|^a]\leq C_a (1+  I_N(F^N)).
$$

(e) Fix $k\geq 2$ and consider a family of symmetric probability densities $(F_n)_{n\geq 1}$ on $(\rr^3)^k$ 
converging weakly to some symmetric probability measure $F$ on $(\rr^3)^k$. Then 
$$
I_k(F) \leq \liminf_n I_k(F_n).
$$

(f) Consider, for each $N\geq 2$, a symmetric probability density $F^N$ on $\rdN$, as well as
$(V^{1,N},\dots,V^{N,N})\sim F^N(v)\dd v$. Assume that $\mu_N:=\frac 1 N \sum_{i=1}^N \delta_{V^{i,N}}$
converges in law, in $\cP(\rr^3)$, to some (possibly random) probability measure $\mu$ on $\rr^3$. Then
$$
\E[I_1(\mu)] \leq \liminf_N I_N(F^N).
$$
\end{lemma}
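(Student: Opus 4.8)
The plan is to combine parts (a), (c) and (e) with a standard lower-semicontinuity argument for the Fisher information under the empirical-measure map, passing through the $\ell$-particle marginals. First I would fix $\ell \ge 2$ and note that, since $\mu_N \Rightarrow \mu$ in law in $\cP(\rr^3)$, the law of $\mu_N$ in $\cP(\cP(\rr^3))$ converges weakly to the law of $\mu$; consequently, by the standard identification between the laws of exchangeable empirical measures and symmetrized marginals (see e.g.~\cite{MR1108185,MR3188710}), the $\ell$-th marginal $F^N_\ell$ converges weakly in $\cP((\rr^3)^\ell)$ to $\E[\mu^{\otimes \ell}]$, the expectation being taken over the randomness of $\mu$. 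By part (e), this yields
$$
I_\ell\big(\E[\mu^{\otimes \ell}]\big) \le \liminf_N I_\ell(F^N_\ell) \le \liminf_N I_N(F^N),
$$
where the second inequality is part (c). So it remains to relate $I_\ell(\E[\mu^{\otimes \ell}])$ to $\E[I_1(\mu)]$.

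For the lower bound on $I_\ell(\E[\mu^{\otimes \ell}])$ in terms of $\E[I_1(\mu)]$, the key observation is convexity of the Fisher information together with its tensorization property. Indeed, $F \mapsto I_\ell(F)$ is convex on $\cP((\rr^3)^\ell)$ (it is a supremum of affine functionals, or one invokes joint convexity of $(a,b)\mapsto |b|^2/a$), so Jensen's inequality gives $I_\ell(\E[\mu^{\otimes \ell}]) \le \E[I_\ell(\mu^{\otimes \ell})]$ — but this is the wrong direction. I would instead use that $I_\ell$ is \emph{convex and lower semicontinuous}, hence equals the supremum of the affine functionals below it, and argue directly: writing $G := \E[\mu^{\otimes \ell}]$, one disintegrates and uses the variational (dual) characterization
$$
I_\ell(G) = \sup_{\varphi} \Big\{ 2\!\int \nabla\cdot\varphi \, \dd G - \int |\varphi|^2 \dd G \Big\},
$$
the supremum over smooth compactly supported vector fields $\varphi$. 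Applying this with $\varphi$ depending only on the first coordinate and testing against $\mu$ for a.e.\ realization, one recovers $I_\ell(\E[\mu^{\otimes\ell}]) \ge \E[I_1(\mu)]$: for each fixed $\varphi(v_1)$, the functional inside the supremum equals $\E\big[2\int \nabla\cdot\varphi\,\dd\mu - \int|\varphi|^2\dd\mu\big]$, and optimizing over $\varphi$ inside the expectation (which is legitimate since the sup is attained at $\varphi = \nabla\log(\text{density of }\mu)$ when $I_1(\mu)<\infty$, and the bound is trivial otherwise) gives $\E[I_1(\mu)]$. Chaining the two displays yields $\E[I_1(\mu)] \le \liminf_N I_N(F^N)$, as desired.

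The main obstacle I anticipate is the measurability and the interchange of the supremum with the expectation in the last step: one must justify that $\omega \mapsto I_1(\mu(\omega))$ is measurable (which follows from writing $I_1$ as a countable supremum of continuous functionals via a dense sequence of test fields $\varphi$, using part (e)'s lower-semicontinuity structure), and that
$$
\sup_\varphi \E\Big[2\!\int \nabla\cdot\varphi\,\dd\mu - \int|\varphi|^2\dd\mu\Big] = \E\Big[\sup_\varphi\Big(2\!\int\nabla\cdot\varphi\,\dd\mu - \int|\varphi|^2\dd\mu\Big)\Big].
$$
The inequality "$\le$" here is immediate; the reverse is a measurable-selection argument, or one sidesteps it entirely by restricting $\varphi$ to a fixed countable dense family and using monotone convergence. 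A cleaner alternative, which I would likely prefer for the writeup, is to avoid the duality altogether: use part (a) to write $I_1(\mu) = I_\ell(\mu^{\otimes\ell})$ pointwise in $\omega$, then apply part (e) "conditionally" — but since $\mu$ is random this still requires the convexity/Jensen step in the correct direction, so the dual formulation seems hard to avoid. Everything else (the marginal convergence, the application of (c) and (e)) is routine.
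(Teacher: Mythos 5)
The first half of your argument (identifying the weak limit of the $\ell$-marginals as $\pi_\ell=\E[\mu^{\otimes\ell}]$, then applying (c) and (e) to get $I_\ell(\pi_\ell)\leq\liminf_N I_N(F^N)$) is exactly how the paper sets things up. But the step you need to close the loop, namely $\E[I_1(\mu)]\leq I_\ell(\E[\mu^{\otimes\ell}])$ for a \emph{fixed} $\ell$, is false in general: since $G\mapsto\int|\nabla G|^2/G$ is jointly convex and $I_\ell(f^{\otimes\ell})=I_1(f)$ by (a), Jensen gives $I_\ell(\E[\mu^{\otimes\ell}])\leq\E[I_1(\mu)]$, i.e.\ the opposite inequality, and it is strict whenever the possible realizations of $\mu$ overlap (e.g.\ $\mu$ equal to one of two overlapping Gaussians with probability $1/2$ each). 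Your attempted repair via the dual formula fails at precisely the point you flag: in the variational characterization of $I_\ell(G)$ the test field $\varphi$ is deterministic, so restricting to $\varphi=\varphi(v_1)$ only yields $\ell\, I_\ell(G)\geq\sup_\varphi\E[X_\varphi]$, and $\sup_\varphi\E[X_\varphi]\leq\E[\sup_\varphi X_\varphi]=\E[I_1(\mu)]$ is again the wrong direction. No measurable-selection or countable-dense-family argument can reverse this, because a selection $\varphi_\omega$ adapted to the realization of $\mu$ is not an admissible (deterministic) test field for $I_\ell(G)$.

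The paper avoids this trap by invoking the level-$3$ Fisher information result of Hauray--Mischler \cite[Theorem~5.7-(2)]{MR3188710} (after identifying the limit marginals via \cite[Theorem~5.3-(1)]{MR3188710}), which asserts directly that $\int_{\cP(\rr^3)}I_1(f)\,\pi(\dd f)\leq\liminf_N I_N(F^N)$. The substance of that theorem is exactly what your fixed-$\ell$ argument cannot reach: one must let the number of retained coordinates tend to infinity, so that a deterministic test function can ``read off'' the realization of $\mu$ from the empirical measure of the remaining coordinates; one has $I_\ell(\pi_\ell)\uparrow\int I_1(f)\,\pi(\dd f)$ as $\ell\to\infty$, with possibly strict inequality for every finite $\ell$. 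As written, your proof has a genuine gap at this step and needs to be replaced by an appeal to (or a reproof of) that level-$3$ lower-semicontinuity result.
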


\begin{proof}
All these properties are classical. For (a), we refer to~\cite[Lemma~3.2]{MR3188710}. 
\vip
Property (b) is nothing but the Sobolev embedding $H^1(\rr^3) \subset L^6(\rr^3)$ applied to
$\sqrt{f}$ (observe that $I_1(f)=4\int_{\rr^3} |\nabla \sqrt f (v)|^2 \dd v$).
\vip
Point (c) is proved in Carlen~\cite[Theorem~3]{MR1132315}, see also~\cite[Lemma~3.7]{MR3188710}.
\vip
For (d), we argue as in~\cite[Lemma~3.3]{MR3254330}.
We introduce the unitary linear transformation
$$
\forall \, (x_1,x_2) \in \rr^2 \quad \Phi  (x_1,x_2) =  \frac1{\sqrt 2}  
\bigl(x_1- x_2,x_1+x_2\bigr) =:   (y_1,y_2).
$$
For $\tilde F^N_2 := F^N_2 \circ \Phi^{-1}$, which is 
the law of $\frac1{\sqrt 2}  \bigl(V_1- V_2,V_1+V_2\bigr)$ and for
$\tilde f$ the first marginal of $\tilde F^N_2$,
$$
I_1(\tilde f) \le  2I_2(\tilde F^N_2)=  2 I_2(F^N_2) \le 2 I_N(F^N).
$$
Indeed, the first inequality is obvious, the equality follows from a simple substitution, and the last 
inequality follows from (c).
On the other hand, we have
\bean
\E[|V^1-V^2|^a]  &=& 
\int_{\rr^3 \times \rr^3} {F^N_2(x_1,x_2)} |x_1-x_2|^a \, \dd x_1\dd x_2
\\
&=&  2^{a/2} \int_{\rr^3 \times \rr^3} \tilde F^N_2(y_1,y_2) {|y_1|^a} \, \dd y_1\dd y_2
\\
&=& 2^{a/2}  \int_{\rr^3} \tilde f (y) |y|^a \, \dd y \\
&\le&   2^{a/2} +  2^{a/2} \int_{|y| \le 1} \tilde f (y) |y|^a \, \dd y.
\eean
For the last term, we have 
$$
\int_{|y| \le 1} \tilde f (y) |y|^a \, \dd y \le  \| \tilde f \|_{L^3(\rr^3)} \Bigl(  \int_{|y| \le 1}  
|y|^{3a/2} \, \dd y \Bigr)^{2/3} =C  \| \tilde f \|_{L^3(\rr^3)},
$$
thanks to  the H\"older inequality and the condition $a > -2$ (whence $3a/2>-3$).
The conclusion follows, since $\| \tilde f \|_{L^3(\rr^3)} \leq C_0 I_1(\tilde f)$ by (b) and
since we have seen that $I_1(\tilde f) \le  2 I_N(F^N)$.

\vip
Property (e) is nothing but~\cite[Lemma~3.5]{MR3188710}. 

\vip
For (f), let us denote by  $\pi \in \cP(\cP(\rr^3))$ the weak limit of $\cL(\mu^N_V)$ and set 
$\pi_k=\int_{\cP(\rr^3)}f^{\otimes k}\pi(\dd f)$ (which belongs to $\cP((\rr^3)^k$) 
for all $k\geq 1$. By~\cite[Theorem~5.3.-(1)]{MR3188710}, it holds that
$F^N_k \to \pi_k$ weakly as $N\to \infty$ for all $k\geq 1$, where $F^N_k$ is the $k$-marginal of $F^N$.
We thus deduce from~\cite[Theorem~5.7.-(2)]{MR3188710} that
$$
\E[I_1(\mu)] = \int_{\cP(\rr^3)} I_1(f) \pi (\dd f) \leq \liminf_N I_N(F^N)
$$
as desired.
\end{proof}

\section{Reminder about the Boltzmann equation}\label{sec:Boltzmann}

We now introduce the notion of weak solutions we deal with.
We denote by $\cP_2(\rr^3)$ the set of probability measures on $\rr^3$ such that
$m_2(f):=\intrd |v|^2 f(\dd v)<\infty$.

\begin{defi}\label{dfsol}
Assume~\eqref{hyp0}-\eqref{hyp1} with some $\gamma \in (-3,0)$.
A function $f \in C([0,\infty);\cP(\rr^3))$
is a weak solution to~\eqref{be} if for all $T>0$,
\begin{gather}
\sup_{t\in [0,T]} m_2(f_t)<\infty, \label{co} \\
\int_0^T \dd t  \intrd f_t(\dd v)\intrd f_t(\dd v_*) |v-v_*|^{2+\gamma} < \infty,\label{cond}
\end{gather}
and if for any $\phi \in C^2_b(\rr^3)$ and any $t\geq 0$,
\begin{equation}\label{wbe}
\intrd \phi(v) f_t(\dd v) =  \intrd \phi(v) f_0(\dd v)
+\int_0^t \dd s \intrd f_s(\dd v) \intrd f_s(\dd v_*)  \cA\phi(v,v_*),
\end{equation}
where $\cA$ has to be understood as the principal value (here $\theta$
is defined by $\cos\theta=\frac{v-v_*}{|v-v_*|}\cdot\sigma$)
\begin{equation}\label{afini}
\cA\phi (v,v_*) = \frac12 \lim_{\eta\to 0}\int_{\Sp_2} 
[\phi(v')+\phi(v'_*)-\phi(v)-\phi(v_*)]\indiq_{\{\theta>\eta\}} B(v-v_*,\sigma) \dd \sigma.
\end{equation}
\end{defi}
We also introduce, for $\phi \in C^2_b(\rr^3)$
and $v,v_* \in \rr^3$,
\begin{equation}\label{abar}
\bar \cA \phi (v,v_*) = \int_{\Sp_2} (\phi(v')-\phi(v)-(v'-v)\cdot \nabla \phi(v)) B(v-v_*,\sigma) \dd \sigma
- b |v-v_*|^\gamma (v-v_*)\cdot \nabla \phi(v),
\end{equation}
where
\beqn\label{def:b} 
b :=\pi\int_0^\pi (1-\cos\theta)\beta(\theta)\dd \theta < \infty, 
\eeqn 
The principal value in~\eqref{afini} is useless if 
$\int_0^\pi \theta\beta(\theta)\dd\theta<\infty$. However, assuming only~\eqref{hyp1}, we have no much than
$\int_0^\pi \theta^2\beta(\theta)\dd\theta<\infty$ which is a weaker information. 
As shown by Villani~\cite{MR1650006}, see also~\eqref{est5} below, we have 
$|\cA\phi (v,v_*)|\leq C_\phi |v-v_*|^{2+\gamma}$ for any $\phi \in C^2_b(\rr^3)$, so that everything 
makes sense in the above definition.  

\begin{thm}\label{thm:exist&uniqBoltz} Assume~\eqref{hyp0}-\eqref{hyp1} with $\gamma \in (-2,0)$ 
and that $f_0 \in \cP_2(\rr^3)$ has a finite Fisher information.
Then, there exists a unique weak solution $f \in C([0,\infty);L^1(\rr^3))$ to the Boltzmann equation~\eqref{be} 
in the sense of Definition~\ref{dfsol} and such that $t \mapsto I_1(f_t) \in L^1_{loc}([0,\infty))$.
\end{thm}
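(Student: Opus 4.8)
The plan is to obtain existence through a regularization and compactness argument, in which the decay of the Fisher information of~\cite{imbertSV2024} (equivalently, Lemma~\ref{isv}) provides the crucial a priori bound, and to obtain uniqueness by quoting directly the uniqueness result of~\cite{MR2398952}, whose hypotheses we will verify thanks to Lemma~\ref{pfish}(b).

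For existence, I would regularize the kernel by replacing $\alpha(r)=r^\gamma$ with $\alpha_\e(r)=(\e^2+r^2)^{\gamma/2}$ (so that $\alpha_\e\le\alpha$ and $\alpha_\e$ is bounded) and $\beta$ with the truncation $\beta_n:=\beta\,\indiq_{\{\theta\ge 1/n\}}$. The kernel $B_{n,\e}$ is then bounded, and the classical cutoff theory for the spatially homogeneous Boltzmann equation provides a weak solution $f^{n,\e}\in C([0,\infty);L^1(\rr^3))$ conserving mass, momentum and kinetic energy, with $H(f^{n,\e}_t)\le H(f_0)$. Since $B_{n,\e}$ is exactly of the form allowed in Lemma~\ref{isv} (note $\int_0^\pi\theta^2\beta_n\,\dd\theta\le\int_0^\pi\theta^2\beta\,\dd\theta<\infty$), the Fisher information monotonicity of~\cite{imbertSV2024} (whose hypotheses, as checked in the proof of Lemma~\ref{isv}, are met by $B_{n,\e}$) applies and gives $I_1(f^{n,\e}_t)\le I_1(f_0)$ for all $t\ge 0$, uniformly in $n$ and $\e$. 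By Lemma~\ref{pfish}(b) this yields $\|f^{n,\e}_t\|_{L^3(\rr^3)}\le C_0 I_1(f_0)$ uniformly in $t,n,\e$; together with $m_2(f^{n,\e}_t)=3$ this makes $\{f^{n,\e}_t\}$ tight and uniformly integrable in $L^1(\rr^3)$ and keeps $\{\sqrt{f^{n,\e}_t}\}$ bounded in $H^1(\rr^3)$. Time equicontinuity of $t\mapsto\langle f^{n,\e}_t,\phi\rangle$ for $\phi\in C^2_b(\rr^3)$ follows from the weak formulation and from the uniform bound $|\cA^{n,\e}\phi(v,v_*)|\le C_\phi|v-v_*|^{2+\gamma}$ (Villani~\cite{MR1650006}), which is integrable against $f^{n,\e}_s\otimes f^{n,\e}_s$ since $2+\gamma\in(0,2)$ and $m_2$ is bounded. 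Extracting a subsequence, one obtains a limit $f\in C([0,\infty);L^1(\rr^3))$ (the membership in $C([0,\infty);L^1(\rr^3))$ following from the uniform $L^3$ and moment bounds together with the weak continuity furnished by the weak formulation), and passing to the limit in the weak formulation — the only delicate point being $\cA^{n,\e}\phi\to\cA\phi$, which I would justify using the cancellation structure in~\eqref{afini}, pointwise convergence, and dominated convergence with the majorant $|v-v_*|^{2+\gamma}$ — shows that $f$ is a weak solution in the sense of Definition~\ref{dfsol}: \eqref{co} is immediate and \eqref{cond} follows from $\sup_t m_2(f_t)<\infty$. Lower semicontinuity of $I_1$ under weak convergence finally gives $I_1(f_t)\le I_1(f_0)$ for all $t\ge 0$, in particular $t\mapsto I_1(f_t)\in L^1_{loc}([0,\infty))$.

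For uniqueness, let $f$ and $\tilde f$ be two weak solutions as in the statement. By Lemma~\ref{pfish}(b), $\|f_t\|_{L^3(\rr^3)}+\|\tilde f_t\|_{L^3(\rr^3)}\le C_0(I_1(f_t)+I_1(\tilde f_t))$, so both belong to $L^1_{loc}([0,\infty);L^3(\rr^3))$; moreover, applying Lemma~\ref{pfish}(d) to $f_s\otimes f_s$ and $\tilde f_s\otimes\tilde f_s$, the quantities $\iint f_s(v)f_s(v_*)|v-v_*|^\gamma\,\dd v\dd v_*$ are in $L^1_{loc}$ in time, so all terms entering the stability estimates are finite. For $\gamma\in(-2,0)$, this $L^3$ integrability in time is precisely the extra regularity under which~\cite{MR2398952} establishes uniqueness of weak solutions, whence $f=\tilde f$.

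The main obstacle I expect is, on the existence side, to make the passage to the limit genuinely uniform in both regularization parameters $n$ and $\e$ — in particular the treatment of the non-cutoff angular singularity in $\cA^{n,\e}\phi\to\cA\phi$ and the associated uniform integrability near grazing collisions — and, on the uniqueness side, to verify that the functional framework of Definition~\ref{dfsol} combined with $f\in L^1_{loc}([0,\infty);L^3(\rr^3))$ matches exactly the hypotheses of~\cite{MR2398952} over the whole range $\gamma\in(-2,0)$; the Sobolev-type bound of Lemma~\ref{pfish}(b) is what bridges the Fisher information estimate and that uniqueness theorem, but one must track the precise integrability exponent it requires, the point being that the exponent $3$ is admissible exactly because $\gamma>-2$.
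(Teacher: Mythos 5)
Your uniqueness argument is exactly the paper's: both solutions lie in $L^1_{loc}([0,\infty);L^3(\rr^3))$ by Lemma~\ref{pfish}-(b), and $3>3/(3+\gamma)$ precisely because $\gamma>-2$, so the uniqueness criterion of~\cite{MR2398952} (Corollary~1.5 and Lemma~1.1-(i)) applies verbatim; your extra appeal to Lemma~\ref{pfish}-(d) is harmless but not needed there. For existence, however, you take a genuinely different route. The paper does not reconstruct the solution: it simply invokes Villani's existence theorem~\cite{MR1650006}, whose compactness comes from the entropy dissipation functional rather than from the Fisher information, and then cites~\cite{imbertSV2024} for the statement that $I_1(f_t)$ is nonincreasing along that solution, which yields the required local time-integrability. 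You instead rebuild existence from scratch by truncating the kernel, propagating the Fisher bound through the approximation via the ISV monotonicity, and using $\|f\|_{L^3}\le C_0 I_1(f)$ plus moment bounds for weak $L^1$ compactness, recovering $I_1(f_t)\le I_1(f_0)$ by lower semicontinuity. This is more self-contained and has the merit of making explicit how the approximate solutions inherit the Fisher bound (which the paper's citation of the a priori ISV estimate leaves implicit); its cost is that it requires more care than you give it: to legitimately run the Fisher-decay computation on the regularized flow you must also mollify the initial datum (as the paper does in the analogous $N$-particle construction in the proof of Theorem~\ref{mr1}, imposing Gaussian upper and lower bounds), and the passage to the limit in the collision term, while indeed controlled by $|\cA\phi|\le C_\phi|v-v_*|^{2+\gamma}$ with $2+\gamma\in(0,2)$, deserves the uniform-integrability argument you only gesture at. Neither point is a gap in the approach, only in the level of detail; both routes are valid.
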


\begin{proof}
The existence of weak solutions has been proved by Villani~\cite{MR1650006}.
His proof is based on a weak compactness argument in $L^1(\rr^3)$, a clever use of the so-called 
dissipation of entropy term and some accurate estimates on the function $\cA\phi$. 
On the other hand, as already mentioned, Imbert, Silvestre and Villani in~\cite{imbertSV2024} have established 
that the Fisher information $I_1(f_t)$ is decaying in such a situation and thus locally time-integrable.
The uniqueness of a weak solution lying in $L^1_{loc}(\rr_+, L^p(\rr^3))$, for some $p\in (3/(3+\gamma),\infty)$,
can be found in~\cite[Corollary~1.5 and Lemma~1.1-(i)]{MR2398952}. 
Since $\|f\|_{L^3(\rr)} \leq C_0 I_1(f)$ by Lemma~\ref{pfish}-(b) and since 
$3 \in  (3/(3+\gamma),\infty)$ because of the restriction $\gamma \in (-2,0)$, the conclusion follows.
\end{proof}

We end this section by presenting some estimates on $\cA$ and $\tilde \cA$.

\begin{lemma}\label{est} Assume~\eqref{hyp0}-\eqref{hyp1} with some $\gamma \in (-3,0)$.
Recall that $v'=v'(v,v_*,\sigma)$ and $v'_*=v'_*(v,v_*,\sigma)$ were introduced in~\eqref{vprimeetc}, for any 
$v,v_* \in \rr^3$ and $\sigma \in \Sp_2$. We have
\begin{gather}
v'+v'_*=v+v_*, \qquad  |v'|^2+|v'_*|^2=|v|^2+|v_*|^2, \label{est1}\\
|v'-v|+|v'_*-v_*|\leq \theta |v-v_*|. \label{est2}
\end{gather}
We have
\begin{align}
\lim_{\eta\to 0} \int_{\Sp_2}(v'-v) B(v-v_*,\sigma)\indiq_{\{\theta>\eta\}} \dd \sigma =& 
-\lim_{\eta\to 0} \int_{\Sp_2}(v'_*-v_*) B(v-v_*,\sigma)\indiq_{\{\theta>\eta\}} \dd \sigma \label{est3}\\
=& b|v-v_*|^\gamma(v_*-v),\notag\\
\int_{\Sp_2}|v'-v|^2 B(v-v_*,\sigma) \dd \sigma =& \int_{\Sp_2} |v'_*-v_*|^2 B(v-v_*,\sigma) \dd \sigma=
b |v-v_*|^{\gamma+2}. \label{est4}
\end{align}
For all $\phi \in C^2_b(\rr^3)$, recalling~\eqref{afini} and~\eqref{abar}, 
there is a constant $C_\phi \in (0,\infty)$ such that
\begin{gather}\label{est5}
|\cA \phi(v,v_*)|\leq C_\phi |v-v_*|^{2+\gamma},\\
\label{est5p1}
|\bar\cA\phi(v,v_*)|\leq C_\phi (|v-v_*|^{1+\gamma}+|v-v_*|^{2+\gamma}),\\
\label{est6}
\cA\phi(v,v_*)=\frac12[\bar\cA\phi(v,v_*) +\bar \cA\phi(v_*,v)].
\end{gather}
\end{lemma}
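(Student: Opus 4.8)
The final statement to prove is Lemma~\ref{est}, which collects several standard estimates on the operators $\cA$ and $\bar\cA$. The plan is to treat each formula in turn, exploiting the geometry of the collision parametrization~\eqref{vprimeetc} together with a Taylor expansion and the angular integrability $\int_0^\pi\theta^2\beta(\theta)\dd\theta<\infty$ coming from~\eqref{hyp1}.

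\textbf{The elementary identities.} For~\eqref{est1}, I would simply add and take squared norms of the expressions in~\eqref{vprimeetc}: the sum $v'+v'_*$ telescopes to $v+v_*$, and $|v'|^2+|v'_*|^2=\frac12|v+v_*|^2+\frac12|v-v_*|^2=|v|^2+|v_*|^2$ since the cross terms cancel (the two $\pm\frac{|v-v_*|}{2}\sigma$ pieces are orthogonal to being added symmetrically). For~\eqref{est2}, note $v'-v=\frac12(v_*-v)+\frac{|v-v_*|}{2}\sigma=\frac{|v-v_*|}{2}(\sigma-\frac{v-v_*}{|v-v_*|})$, and $|\sigma-\frac{v-v_*}{|v-v_*|}|=2\sin(\theta/2)\le\theta$, giving $|v'-v|\le\frac{\theta}{2}|v-v_*|$; symmetrically $|v'_*-v_*|\le\frac{\theta}{2}|v-v_*|$, and summing yields~\eqref{est2}.

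\textbf{The moment identities.} For~\eqref{est3}, I would use that $v'-v=\frac{|v-v_*|}{2}(\sigma-k)$ with $k:=\frac{v-v_*}{|v-v_*|}$; integrating over $\Sp_2$ in spherical coordinates around $k$, the component of $\sigma$ orthogonal to $k$ integrates to zero by symmetry, so only the $k$-component survives: $\int_{\theta>\eta}(\sigma\cdot k)k\,B\,\dd\sigma-\int_{\theta>\eta}k\,B\,\dd\sigma$, which after writing $B\dd\sigma$ in terms of $\alpha(|v-v_*|)\beta(\theta)\sin\theta\,\dd\theta\,\dd\varphi$ and integrating $\varphi$ gives $-2\pi\alpha(|v-v_*|)\int_\eta^\pi(1-\cos\theta)\beta(\theta)\dd\theta\cdot\frac{|v-v_*|}{2}k\to b|v-v_*|^\gamma(v_*-v)$ as $\eta\to0$, using~\eqref{def:b}; the identity with $v'_*-v_*$ follows from $v'-v=-(v'_*-v_*)$. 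For~\eqref{est4}, $|v'-v|^2=\frac{|v-v_*|^2}{4}|\sigma-k|^2=\frac{|v-v_*|^2}{2}(1-\cos\theta)$, so $\int_{\Sp_2}|v'-v|^2B\,\dd\sigma=\frac{|v-v_*|^2}{2}\cdot 2\pi\alpha(|v-v_*|)\int_0^\pi(1-\cos\theta)\beta(\theta)\dd\theta=|v-v_*|^{\gamma+2}b$, and similarly for $v'_*-v_*$; here the $\int\theta^2\beta$ assumption guarantees finiteness.

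\textbf{The estimates on $\cA\phi$ and $\bar\cA\phi$.} For~\eqref{est5p1}, I Taylor-expand: $\phi(v')-\phi(v)-(v'-v)\cdot\nabla\phi(v)$ is bounded by $\frac12\|D^2\phi\|_\infty|v'-v|^2\le C_\phi\theta^2|v-v_*|^2$, which after integrating against $B$ and using $\int\theta^2\beta<\infty$ contributes $C_\phi|v-v_*|^{2+\gamma}$; the last term in~\eqref{abar} is clearly $\le C_\phi|v-v_*|^{1+\gamma}$. For~\eqref{est6}, I use the representation~\eqref{est3}: inside the symmetrized bracket $[\phi(v')+\phi(v'_*)-\phi(v)-\phi(v_*)]$ of~\eqref{afini}, I add and subtract the first-order terms $(v'-v)\cdot\nabla\phi(v)$ and $(v'_*-v_*)\cdot\nabla\phi(v_*)$; the regularized integrals of these first-order pieces converge (by~\eqref{est3}) to $b|v-v_*|^\gamma(v_*-v)\cdot\nabla\phi(v)$ and $b|v-v_*|^\gamma(v-v_*)\cdot\nabla\phi(v_*)$ respectively, which are exactly the correction terms subtracted in~\eqref{abar} (up to the sign and the swap $v\leftrightarrow v_*$ in the second), so that the $\eta\to0$ limit exists and equals $\frac12[\bar\cA\phi(v,v_*)+\bar\cA\phi(v_*,v)]$. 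Finally~\eqref{est5} follows from~\eqref{est6} and~\eqref{est5p1}: $|\cA\phi(v,v_*)|\le\frac12(|\bar\cA\phi(v,v_*)|+|\bar\cA\phi(v_*,v)|)\le C_\phi(|v-v_*|^{1+\gamma}+|v-v_*|^{2+\gamma})$, but since the two first-order terms in~\eqref{abar} combine in the symmetrization (both equal to $b|v-v_*|^\gamma$ times opposite vectors $(v_*-v)$ and $(v-v_*)$ dotted with $\nabla\phi$ at different points) the full $|v-v_*|^{1+\gamma}$ piece does \emph{not} cancel pointwise — so more care is needed and I would instead absorb it directly, noting that the symmetrized first-order contribution is $\frac{b}{2}|v-v_*|^\gamma(v_*-v)\cdot(\nabla\phi(v)-\nabla\phi(v_*))$, bounded by $C_\phi|v-v_*|^{2+\gamma}$ since $|\nabla\phi(v)-\nabla\phi(v_*)|\le\|D^2\phi\|_\infty|v-v_*|$.

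\textbf{Main obstacle.} The only genuinely delicate point is the handling of the principal value in~\eqref{est6}: one must verify that the individual first-order terms, whose $\sigma$-integrals diverge like $\int_\eta^\pi\theta\beta(\theta)\dd\theta$ as $\eta\to0$ when only $\int\theta^2\beta<\infty$ is assumed, combine after symmetrization so that the limit is finite — and identify the limit precisely with the explicit drift $-b|v-v_*|^\gamma(v-v_*)\cdot\nabla\phi(v)$ appearing in~\eqref{abar}. This requires being careful that the regularization $\indiq_{\{\theta>\eta\}}$ is the same in all terms and that~\eqref{est3} is applied with the correct orientation; everything else is routine spherical-coordinate computation.
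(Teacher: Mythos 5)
Your proof is correct and follows essentially the same route as the paper: explicit spherical coordinates around $v-v_*$, the identity $|v'-v|^2=\frac{1-\cos\theta}{2}|v-v_*|^2$, convergence of the $\varphi$-averaged first-order terms to the drift $b|v-v_*|^\gamma(v_*-v)$, and a second-order Taylor expansion. The only organizational difference is that you deduce \eqref{est5} from \eqref{est6} by exploiting the cancellation $(v_*-v)\cdot(\nabla\phi(v)-\nabla\phi(v_*))=O(|v-v_*|^2)$ in the symmetrized drift (a point you rightly flag as the delicate one, since the naive bound through \eqref{est5p1} only gives the weaker exponent $1+\gamma$), whereas the paper obtains the very same cancellation directly inside the $\varphi$-integrated Taylor expansion of the four-term bracket; both arguments rest on the same observation.
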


\begin{proof} The proof follows closely  Villani~\cite{MR1650006}.
We fix $v\neq v_*$ (if $v=v_*$, then $v'=v$ and $v'_*=v_*$, so that everything is obvious).
The identities~\eqref{est1} are immediate from~\eqref{vprimeetc}. We next introduce some polar coordinate 
system with axis $v-v_*$ and write, with $\theta\in [0,\pi)$ and $\varphi\in [0,2\pi)$,
$$
\sigma=\frac{v-v_*}{|v-v_*|}\cos\theta+(\cos\varphi \bi + \sin \varphi \bj)\sin \theta,
\;\; \text{so that} \;\; B(v-v_*,\sigma)\dd \sigma = |v-v_*|^\gamma \beta(\theta)\dd \theta \dd \varphi. 
$$
Setting $\Gamma(\varphi)=|v-v_*|(\cos\varphi \bi + \sin \varphi \bj)$, the identities~\eqref{vprimeetc} write 
$$
v'=v-\frac{1-\cos\theta}2(v-v_*)+\frac{\sin\theta}2\Gamma(\varphi)\quad \text{and} \quad
v'_*=v_*+\frac{1-\cos\theta}2(v-v_*)-\frac{\sin\theta}2\Gamma(\varphi).
$$
One thus has $|v'-v|^2=|v'_*-v_*|^2=\frac{1-\cos\theta}2|v-v_*|^2 = \sin^2(\theta/2) |v-v_*|^2$, 
from which both~\eqref{est2} and~\eqref{est4} follow.
Using that
$$
\int_0^{2\pi} (v'-v) \dd\varphi=-\int_0^{2\pi} (v'_*-v_*) \dd\varphi= \pi(1-\cos\theta)(v_*-v),
$$
integrating these identities against $|v-v_*|^\gamma \indiq_{\{\theta>\eta\}}\beta(\theta)\dd \theta$ and 
letting $\eta\to 0$, one gets~\eqref{est3}.
Next, performing a Taylor expansion as in~\cite[Section~4]{MR1650006}, we have 
$$ 
\Bigl| \int_0^{2\pi}(\phi(v')+\phi(v'_*)-\phi(v)-\phi(v_*))\dd\varphi \Bigr|\leq C_\phi \theta^2 |v-v_*|^2.
$$
Coming back to the definition~\eqref{afini} and recalling \eqref{hyp0}-\eqref{hyp1}, we deduce 
$$
|\cA\phi(v,v_*)|= \lim_{\eta\to 0} \frac{|v-v_*|^\gamma}2\Big|\int_\eta^\pi \int_0^{2\pi}
(\phi(v')+\phi(v'_*)-\phi(v)-\phi(v_*))\dd\varphi \beta(\theta)\dd \theta\Big| \leq C_\phi |v-v_*|^{\gamma+2}.
$$
Performing a  Taylor expansion in the very definition~\eqref{abar} of $\bar\cA\phi$, we get
$$
|\bar\cA \phi(v,v_*)|\leq ||D^2\phi||_\infty \int_{\Sp_2} |v'-v|^2B(v-v_*,\sigma)\dd \sigma
+ b  ||D\phi||_\infty|v-v_*|^{\gamma+1}, 
$$
and thus~\eqref{est5p1} follows from~\eqref{est4}. Finally, \eqref{est6} follows from~\eqref{est3}.
\end{proof}

\section{Kac $N$-particle system}\label{sec:Kac}
We reformulate the Kac $N$-particle system introduced in Section~\ref{subsec:KacSystem} 
in a more precise way, as a solution to a stochastic differential equation. 
We fix $N\geq 2$ and consider a family of independent Poisson measures 
$(\Pi^N_{ij}(\dd s,\dd \sigma,\dd z))_{1\leq i \neq j \leq N}$ on $\rr_+\times \Sp_2\times \rr_+$ with intensity measure
$\frac{1}{2(N-1)} \dd s \dd \sigma \dd z$. 
Consider $f_0 \in \cP_2(\rr^3)$ and an i.i.d. collection $(V_0^i)_{i=1,\dots,N}$ of $f_0$-distributed random
variables independent of the above Poisson measures. Set $\bV_0^N=(V_0^1,\dots,V_0^N)$ and consider
the S.D.E. with unknown $(\bV^N_t=(V^{1,N}_t,\dots,V^{N,N}_t))_{t\geq 0}$ 
valued in $\rdN$:
\begin{align}\label{ks}
\bV^N_t=&\bV^N_0 + \sum_{1\leq i\neq j \leq N} \int_0^t \int_{\Sp_2}\int_0^\infty [v'_{ij}(\bV^N_{s-},\sigma)-\bV^N_\sm]
\indiq_{\{z< B(V^{i,N}_\sm-V^{j,N}_\sm,\sigma)\}}
\tilde \Pi^N_{ij}(\dd s, \dd \sigma, \dd z) \notag \\
&+ \frac{b}{2(N-1)}\sum_{1\leq i \neq j \leq N}  \int_0^t |V^{i,N}_s - V^{j,N}_s|^\gamma (V^{i,N}_s - V^{j,N}_s)(\be_j
-\be_i ) \dd s.
\end{align}
We recall that for $h \in \rr^3$, $h\be_i=(0,\dots,0,h,0,\dots,0)$ with $h$ at the $i$-th place, that
$b$ is defined in~\eqref{def:b} and that
\beqn\label{def:tildePiN}
\tilde \Pi^N_{ij}(\dd s, \dd \sigma,\dd z)=\Pi^N_{ij}(\dd s, \dd \sigma,\dd z)
-\frac{1}{2(N-1)} \dd s \dd \sigma \dd z
\eeqn 
is the compensated Poisson measure. 
If assuming that 
$\int_0^\pi \theta\beta(\theta)\dd \theta<\infty$, one may rewrite~\eqref{ks}
in a simpler form: replace $\tilde \Pi^N_{ij}$ by $\Pi^N_{ij}$ and remove the second line.
A solution to~\eqref{ks} has to be
càdlàg and adapted to some filtration in which $(\Pi^N_{ij})_{1\leq i < j \leq N}$ are Poisson.
We recall that exchangeable random vectors and symmetric probability measures have been defined in  
Section~\ref{subsec:propgation-chaos}.
For $F$ symmetric, we introduce the energy 
\begin{gather*}
E_N(F)=\frac 1 N \int_{\rdN} |v|^2 F(v) \dd v= \int_{\rdN} |v_1|^2 F(v) \dd v
\end{gather*}
and we recall the definition of the Fisher information~\eqref{eq:FisherN} 
\begin{gather*}
I_N(F)=\frac 1 N \int_{\rdN} \frac{|\nabla F(v)|^2}{F(v)} \dd v =  \int_{\rdN} \frac{|\nabla_1 F(v)|^2}{F(v)} 
\dd v. 
\end{gather*}
Here $\nabla$ stands for the gradient in $(\rr^3)^N$ and $\nabla_1$ stands for the gradient 
in the first variable in $\rr^3$. 

\begin{thm}\label{mr1}
Assume~\eqref{hyp0}-\eqref{hyp1} for some $\gamma \in (-2,0)$.
Consider a probability density $f_0$ on $\rr^3$ such that $m_2(f_0)$ and $I_1(f_0)$ are finite. 
For each $N\geq 2$, 
there exists an exchangeable solution $(\bV^N_t)_{t\geq 0}$ to~\eqref{ks} enjoying the following properties.
First, it is strongly conservative:
\begin{equation}\label{sc}
\text{a.s., for all $t\geq 0$,} \quad  \sum_{i=1}^N V^{i,N}_t=\sum_{i=1}^N V^{i,N}_0 \quad \text{and} \quad 
\sum_{i=1}^N |V^{i,N}_t|^2=\sum_{i=1}^N |V^{i,N}_0|^2.
\end{equation}
Next, for each $t\geq 0$, denoting by $F^N_t$ the law of $\bV^N_t$,
\begin{equation}\label{eq:EnergyN}
E_N(F^N_t)=E_N(F^N_0)=m_2(f_0)\quad \text{and} \quad I_N(F^N_t) \leq I_N(F^N_0)=I_1(f_0).
\end{equation}
Moreover, $(F^N_t)_{t\geq 0}$ is a weak solution to~\eqref{eq:BlNeq} in the following sense:
$$
\int_{\rdN} \phi(v) F^N_t(\dd v) = \int_{\rdN} \phi(v) F^N_0(\dd v)+ \int_0^t 
\int_{\rdN} \cL_N \phi(v) F^N_s(\dd v) \dd s, 
$$
for all $\phi \in C^2_b(\rdN)$ and for all $t\geq 0$. 
\end{thm}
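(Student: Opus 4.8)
The plan is to construct the solution to~\eqref{ks} by a fixed-point/approximation scheme, then read off the conservation laws from the structure of the equation, and finally use the Fisher information decay from Proposition~\ref{prop:decay} to control the singularity. More precisely, I would first build an approximate system in which the kernel $\alpha(r)=r^\gamma$ is replaced by the regularized $\alpha_\e(r)=(\e^2+r^2)^{\gamma/2}$ and, if needed, the angular part $\beta$ is truncated to $\beta\indiq_{\{\theta>\eta\}}$. For such bounded, Lipschitz kernels the jump rates are bounded on compact sets and a solution $(\bV^{N,\e,\eta}_t)_{t\geq 0}$ to the associated SDE exists and is unique by a standard Picard iteration on the Poisson-driven equation (as in Tanaka~\cite{MR512334}, Méléard~\cite{MR1431299}); exchangeability is inherited from the symmetry of the construction and the i.i.d.\ initial data. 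The conservation laws~\eqref{sc} hold exactly for the regularized system because each jump $v\mapsto v'_{ij}(v,\sigma)$ preserves $\sum v_i$ and $\sum|v_i|^2$ by~\eqref{est1}, and the drift term in~\eqref{ks} is precisely the compensator of the jumps projected suitably, so it does not spoil conservation; in fact the simplest route is to note that with the cutoff $\int_0^\pi\theta\beta\dd\theta<\infty$ one may drop the second line and work with $\Pi^N_{ij}$ directly, in which case conservation is manifest jump-by-jump.

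Next I would establish the a priori estimates~\eqref{eq:EnergyN} at the level of the regularized system. The energy identity $E_N(F^{N,\e,\eta}_t)=m_2(f_0)$ is immediate from~\eqref{sc}. For the Fisher information bound I would appeal to Proposition~\ref{prop:decay}, which applies verbatim to the regularized kernel $\alpha_\e(r)=(\e^2+r^2)^{\gamma/2}$ with $\int_0^\pi\theta^2\beta\dd\theta<\infty$ (and a fortiori to any further angular truncation): the law $F^{N,\e,\eta}_t$ solves the Kac master equation~\eqref{eq:BlNeq} with the regularized generator, so $t\mapsto I_N(F^{N,\e,\eta}_t)$ is nonincreasing, hence bounded by $I_N(f_0^{\otimes N})=I_1(f_0)$ by Lemma~\ref{pfish}-(a). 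This is the crucial point: the bound is uniform in $\e$, $\eta$ and $N$, and via Lemma~\ref{pfish}-(d) it yields a uniform bound $\E[|V^{i,N,\e,\eta}_s-V^{j,N,\e,\eta}_s|^\gamma]\le C_\gamma(1+I_1(f_0))$, which controls the only potentially divergent quantity (the collision rate $|v_i-v_j|^\gamma$) once we remove the cutoffs.

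Then I would pass to the limit $\e\to 0$, $\eta\to 0$. Using the uniform moment and Fisher-information bounds one gets tightness of the laws of the processes $(\bV^{N,\e,\eta}_\cdot)$ in the Skorokhod space (the jump intensities are uniformly integrable thanks to the $|v-v_*|^\gamma$ moment bound, and the compensated small jumps are controlled in $L^2$ via~\eqref{est4}); extracting a limit point and identifying it as a solution to~\eqref{ks} is done by the usual martingale-problem argument, writing the equation in weak form against $\phi\in C^2_b(\rdN)$, splitting the angular integral at $\theta=\eta$, using~\eqref{est2} and~\eqref{est4} for the small-angle part and ordinary convergence for the bounded part. Exchangeability, the conservation laws~\eqref{sc}, the energy identity and the Fisher bound $I_N(F^N_t)\le I_1(f_0)$ all pass to the limit: the first three because they are closed conditions under Skorokhod convergence (for Fisher information one uses the lower-semicontinuity Lemma~\ref{pfish}-(e) together with the fact that $F^{N,\e,\eta}_t\wto F^N_t$). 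The weak formulation of~\eqref{eq:BlNeq} for $(F^N_t)$ is then obtained by taking expectations in the weak form of the SDE and applying It\^o's formula, noting that $\E[\cL_N\phi(\bV^N_s)]$ is finite because $|\cL_N\phi(v)|\le C_\phi\,\frac1{N-1}\sum_{i\neq j}|v_i-v_j|^{2+\gamma}$ by~\eqref{est5}-type estimates, and this is integrable by the energy bound since $2+\gamma\le 2$.

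The main obstacle I anticipate is the removal of the cutoffs in the nonlinear stochastic integral: one must show that the (compensated) contribution of grazing collisions is negligible uniformly in $N$ and that no mass of the jump measure escapes in the limit. This is exactly where the uniform Fisher information bound is indispensable — it converts the formally divergent factor $|v_i-v_j|^\gamma$ into an integrable one via Lemma~\ref{pfish}-(d) — and where the restriction $\gamma\in(-2,0)$ is used, so that $\gamma>-2$ makes $\E[|V^i-V^j|^\gamma]$ finite and $2+\gamma>0$ keeps the second-moment-type bounds in force. Once this uniform integrability is in hand, the rest of the passage to the limit is routine, following the càdlàg propagation-of-chaos techniques of Sznitman~\cite{MR753814} and Méléard~\cite{MR1431299}.
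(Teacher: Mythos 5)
Your overall strategy coincides with the paper's: regularize the cross section so that the jump rates are bounded, solve the regularized SDE directly, apply Proposition~\ref{prop:decay} to get the uniform Fisher information bound, convert it into the singular moment bound via Lemma~\ref{pfish}-(d), and pass to the limit by tightness and martingale problems, using Lemma~\ref{pfish}-(e) to transmit the Fisher bound. (The paper truncates the angular singularity by $\beta\wedge\e^{-1}$ rather than $\beta\indiq_{\{\theta>\eta\}}$, and solves the bounded-rate SDE by induction on jump times rather than Picard iteration; these are immaterial variants.)

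There is, however, one concrete gap: you apply Proposition~\ref{prop:decay} to $F^{N,\e,\eta}_t$ starting from $f_0^{\otimes N}$, but that proposition is stated only for \emph{smooth enough} solutions, and its proof requires differentiating $t\mapsto I_N(F_t)$ and manipulating quantities such as $|\nabla_\ell F|^2/F$ under the pre/post-collisional change of variables. The regularized generator is a bounded jump operator and does not regularize, so $F^{N,\e,\eta}_t$ is no smoother (and no more strictly positive) than its initial datum; finiteness of $I_1(f_0)$ alone does not justify the computation. The paper handles this by also mollifying the initial datum: it replaces $f_0$ by a smooth $f_0^\e$ sandwiched between multiples of a Gaussian, with $m_2(f_0^\e)\le 2m_2(f_0)$, $I_1(f_0^\e)\le 2I_1(f_0)$ and $I_1(f_0^\e)\to I_1(f_0)$; a weak maximum principle for the linear regularized master equation then propagates the Gaussian sandwich and the weighted $L^2$ bound on $\nabla F^{N,\e}_t$ in time, which is exactly the regularity needed to run the chain rule in Proposition~\ref{prop:decay}. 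The Fisher bound $I_N(F^N_t)\le I_1(f_0)$ for the limit is then recovered from $I_N(F^{N,\e}_t)\le I_1(f_0^\e)$ by lower semicontinuity and the convergence $I_1(f_0^\e)\to I_1(f_0)$. Without this extra layer of approximation your appeal to Proposition~\ref{prop:decay} is formal rather than justified; with it, your argument matches the paper's.
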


It is worth emphasizing that $\cL_N\phi$ is well-defined as the principal value 
\begin{equation}\label{LN}
\cL_N \phi(v)= \frac 1 {2(N-1)} \sum_{1\leq i \neq  j \leq N} \lim_{\eta\to 0}\int_{\Sp_2} 
[\phi(v'_{ij}) - \phi(v)] B(v_i-v_j,\sigma)\indiq_{\{\theta_{ij}>\eta\}} \dd \sigma.
\end{equation}
for all $\phi\in C^2_b(\rdN)$, where $\theta_{ij}\in [0,\pi]$ is defined by
$\cos\theta_{ij}= \frac{v_i-v_j}{|v_i-v_j|}\cdot\sigma$.
This precaution is useless if the singularity of $\beta$ is removed. 
Using \eqref{vvvv}, one can check that $\cL_N$ is self-adjoint:  for
$\phi,\psi:\rdN\to \rr$ smooth enough, it holds 
\begin{equation}\label{edpF}
\int_{\rdN} \phi(v) \cL_N \psi (v) \dd v = \int_{\rdN} \psi(v) \cL_N \phi (v) \dd v.
\end{equation}

We now state without proof the immediate $N$-particle counterpart of Lemma~\ref{est}.

\begin{lemma}\label{estBIS} Assume~\eqref{hyp0}-\eqref{hyp1} with some $\gamma \in (-2,0)$.
Recall that $v'_{ij}=v'_{ij}(v,\sigma)\in\rdN$ was defined in~\eqref{vpij} for any $v \in \rdN$ 
and $\sigma \in \Sp_2$. Denoting by $v'_{ijk}$ the $k$-th coordinate of $v'_{ij}$ and introducing $\theta_{ij}$
as a few lines above, it holds 
\begin{gather}
\sum_{k=1}^Nv'_{ijk}(v,\sigma) =\sum_{k=1}^N v_k, \qquad \sum_{k=1}^N|v'_{ijk}(v,\sigma)|^2=\sum_{k=1}^N|v_k|^2 ,
\label{est7}\\
|v'_{ij}-v|\leq \theta_{ij}|v_i-v_j| \label{est8}
\end{gather}
and, with $b$ defined in~\eqref{def:b},
\begin{gather}
\lim_{\eta\to 0}\int_{\Sp_2}[v'_{ij}(v,\sigma)-v]B(v_i-v_j,\sigma)\indiq_{\{\theta_{ij}>\eta\}} 
\dd \sigma = b |v_i-v_j|^\gamma(v_i-v_j)(\be_j - \be_i), 
\label{est9}\\
\int_{\Sp_2}|v'_{ij}(v,\sigma)-v|^2 B(v_i-v_j,\sigma) \dd \sigma = 2b |v_i-v_j|^{\gamma+2}. \label{est10}
\end{gather}
For all $\phi \in C^2_b(\rdN)$, there is a constant $C_{N,\phi}\in (0,\infty)$ such that
\begin{equation}\label{est11}
|\cL_N \phi(v)|\leq C_{N,\phi} \sum_{1\leq i \neq j \leq N} |v_i-v_j|^{\gamma+2}.
\end{equation}
Also, $\cL_N \phi= \bar \cL_N\phi$ for any $\phi \in C^2_b(\rdN$, where
\begin{align}\label{est12}
\bar \cL_N \phi(v)=&\frac 1{2(N-1)}\sum_{1\leq i \neq j \leq N} \int_{\Sp_2} 
[\phi(v'_{ij}(v,\sigma)) - \phi(v)-(v'_{ij}(v,\sigma)-v)\cdot \nabla\phi(v)] B(v_i-v_j,\sigma) \dd \sigma\notag\\
& + \frac b{2(N-1)}\sum_{1\leq i \neq j \leq N} |v_i-v_j|^\gamma [(v_i-v_j)(\be_j-\be_i)]\cdot \nabla\phi(v).
\end{align}
\end{lemma}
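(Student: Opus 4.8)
The plan is to prove Lemma~\ref{estBIS} as a routine lift of Lemma~\ref{est} from one colliding pair in $(\rr^3)^2$ to the pair $(i,j)$ sitting inside $\rdN$, handling the other coordinates as passive spectators. First I would observe that, by the very definition~\eqref{vpij} of $v'_{ij}$, only the $i$-th and $j$-th blocks of $v$ are modified, and the map $(v_i,v_j)\mapsto(v'(v_i,v_j,\sigma),v'_*(v_i,v_j,\sigma))$ is exactly the one appearing in~\eqref{vprimeetc}. Hence~\eqref{est7} is immediate from~\eqref{est1}: the sum and the sum of squares of the $k\notin\{i,j\}$ coordinates are untouched, and on the $\{i,j\}$ block we invoke $v'+v'_*=v+v_*$ and $|v'|^2+|v'_*|^2=|v|^2+|v_*|^2$. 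Likewise $|v'_{ij}-v|^2=|v'(v_i,v_j,\sigma)-v_i|^2+|v'_*(v_i,v_j,\sigma)-v_j|^2$, and since $\theta_{ij}$ is defined through $v_i-v_j$ exactly as $\theta$ is defined through $v-v_*$, estimate~\eqref{est8} follows from~\eqref{est2} (using $|a|+|b|\ge\sqrt{|a|^2+|b|^2}$, or more directly from the computation $|v'-v|^2=|v'_*-v_*|^2=\sin^2(\theta/2)|v-v_*|^2$ in the proof of Lemma~\ref{est}).

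Next I would treat the two limit/integral identities. For~\eqref{est9}, note $\int_{\Sp_2}[v'_{ij}(v,\sigma)-v]B(v_i-v_j,\sigma)\indiq_{\{\theta_{ij}>\eta\}}\dd\sigma$ has a nonzero contribution only in the $i$-th and $j$-th blocks, equal respectively to $\int_{\Sp_2}(v'-v_i)B\indiq_{\{\theta>\eta\}}\dd\sigma$ and $\int_{\Sp_2}(v'_*-v_j)B\indiq_{\{\theta>\eta\}}\dd\sigma$; by~\eqref{est3} these converge to $b|v_i-v_j|^\gamma(v_j-v_i)$ and $b|v_i-v_j|^\gamma(v_i-v_j)$, which assembles into $b|v_i-v_j|^\gamma(v_i-v_j)(\be_j-\be_i)$. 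For~\eqref{est10}, $\int_{\Sp_2}|v'_{ij}(v,\sigma)-v|^2B(v_i-v_j,\sigma)\dd\sigma=\int_{\Sp_2}(|v'-v_i|^2+|v'_*-v_j|^2)B\,\dd\sigma=2b|v_i-v_j|^{\gamma+2}$ by~\eqref{est4}.

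For~\eqref{est11}, I would bound each summand of~\eqref{LN} by applying the single-pair estimate~\eqref{est5}: with $v$ fixed, the function $(w_1,w_2)\mapsto\phi(v+(w_1-v_i)\be_i+(w_2-v_j)\be_j)$ is $C^2_b$ on $(\rr^3)^2$ with $C^2$-norm bounded by a constant depending only on $N$ and $\phi$, so the principal value $\lim_{\eta\to0}\int_{\Sp_2}[\phi(v'_{ij})-\phi(v)]B(v_i-v_j,\sigma)\indiq_{\{\theta_{ij}>\eta\}}\dd\sigma$ — which is one half of $\bar\cA\psi(v_i,v_j)+\bar\cA\psi(v_j,v_i)$ for $\psi$ this restricted function, cf.~\eqref{est6} — is bounded in absolute value by $C_{N,\phi}|v_i-v_j|^{\gamma+2}$ by~\eqref{est5}; summing over the $2N(N-1)$ ordered pairs and dividing by $2(N-1)$ gives the claim. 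Finally, the identity $\cL_N\phi=\bar\cL_N\phi$ with $\bar\cL_N$ as in~\eqref{est12} follows from~\eqref{est9}: adding and subtracting the first-order term $(v'_{ij}-v)\cdot\nabla\phi(v)$ inside each $\Sp_2$-integral of~\eqref{LN}, the difference $\phi(v'_{ij})-\phi(v)-(v'_{ij}-v)\cdot\nabla\phi(v)$ is $O(|v'_{ij}-v|^2)=O(\theta_{ij}^2|v_i-v_j|^2)$ by Taylor, hence integrable against $B(v_i-v_j,\sigma)\dd\sigma$ without any principal value (using $\int_0^\pi\theta^2\beta(\theta)\dd\theta<\infty$), while the compensating linear term $\lim_{\eta\to0}\int_{\Sp_2}(v'_{ij}-v)B\indiq_{\{\theta_{ij}>\eta\}}\dd\sigma\cdot\nabla\phi(v)=b|v_i-v_j|^\gamma[(v_i-v_j)(\be_j-\be_i)]\cdot\nabla\phi(v)$ by~\eqref{est9} is precisely the second line of~\eqref{est12}.

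Since the statement is explicitly flagged as the immediate $N$-particle counterpart of Lemma~\ref{est}, there is no genuine obstacle; the only mild care needed is the bookkeeping of which coordinate blocks are affected and the observation that restricting a $C^2_b(\rdN)$ function to the $(i,j)$-block (with the others frozen) yields a $C^2_b((\rr^3)^2)$ function uniformly in the frozen variables, so that the single-pair estimates of Lemma~\ref{est} apply with an $N$-dependent but $v$-independent constant.
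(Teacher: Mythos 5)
The paper states this lemma without proof (``the immediate $N$-particle counterpart of Lemma~\ref{est}''), so the only question is whether your lift is sound. For \eqref{est7}, \eqref{est8}, \eqref{est9}, \eqref{est10} and \eqref{est12} it is: the bookkeeping of the $i$- and $j$-blocks is exactly right, and your derivation of \eqref{est12} (Taylor remainder of size $\theta_{ij}^2|v_i-v_j|^2$ integrable against $\beta$, compensating linear term identified via \eqref{est9}) is also the correct way to see that the principal value in \eqref{LN} exists in the first place.

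Your argument for \eqref{est11}, however, does not work. The identity \eqref{est6} and the bound \eqref{est5} concern $\cA\phi$ for a \emph{single-variable} $\phi$, i.e.\ the combination $\phi(v')+\phi(v'_*)-\phi(v)-\phi(v_*)$; the $(i,j)$-summand of $\cL_N\phi$ is $\phi(v'_{ij})-\phi(v)=\psi(v',v'_*)-\psi(v_i,v_j)$ for a genuinely two-variable restriction $\psi$, and ``$\bar\cA\psi(v_i,v_j)$'' is not even defined. More importantly, the cancellation behind \eqref{est5} --- after averaging in $\varphi$ the first-order terms combine into $\pi(1-\cos\theta)(v_*-v)\cdot(\nabla\phi(v)-\nabla\phi(v_*))=O(\theta^2|v-v_*|^2)$ --- has no analogue here: the first-order part of the $(i,j)$-summand equals $b|v_i-v_j|^\gamma(v_j-v_i)\cdot[\nabla_i\phi(v)-\nabla_j\phi(v)]$, and for a general $\phi\in C^2_b(\rdN)$ the bracket is not $O(|v_i-v_j|)$ (take $\phi(v)=\chi(v_1)$); the $(j,i)$-summand contributes the same term with the same sign, so no cancellation occurs in the sum either. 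What your own Taylor-expansion method (the one you use for \eqref{est12}, combined with \eqref{est9}--\eqref{est10}) actually yields is $|\cL_N\phi(v)|\le C_{N,\phi}\sum_{i\neq j}(|v_i-v_j|^{\gamma+1}+|v_i-v_j|^{\gamma+2})$, the analogue of \eqref{est5p1} rather than of \eqref{est5}. For $\gamma\in(-2,-1)$ the term $|v_i-v_j|^{\gamma+1}$ dominates near the diagonal and cannot be absorbed into $|v_i-v_j|^{\gamma+2}$, so \eqref{est11} as written is recovered only for symmetric $\phi$ (for which $|\nabla_i\phi(v)-\nabla_j\phi(v)|\le \sqrt2\,\|D^2\phi\|_\infty|v_i-v_j|$, since $\nabla_i\phi(v)=\nabla_j\phi(\tau_{ij}v)$); for general $\phi$ you should state and prove the weaker bound with the extra $|v_i-v_j|^{\gamma+1}$ term, which is all that is needed downstream (it is still integrable against $F^N_s$ by Lemma~\ref{pfish}-(d) since $\gamma+1>-2$).
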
  

We are now in position to present the proof of Theorem~\ref{mr1}.

\begin{proof}[Proof of Theorem~\ref{mr1}]
We fix $N\geq 2$, $B$ satisfying~\eqref{hyp0}-\eqref{hyp1} 
with $\gamma \in (-2,0)$, as well as 
$f_0\in \cP_2(\rr^3)$ such that $I_1(f_0)<\infty$. We 
introduce a family of smooth probability densities $f_0^\e$,
  weakly  converging to $f_0$ and such that $m_2(f_0^\e)\leq 2 m_2(f_0)$, 
$I_1(f_0^\e) \leq 2 I_1(f_0)$, $\lim_\e m_2(f^\e_0)=m_2(f_0)$ 
and $\lim_\e I_1(f_0^\e)=I_1(f_0)$. 
Denoting by $\mathfrak{g}$ the standard Gaussian density, we may also assume  that 
$\e \mathfrak{g}  \le f_0^\e \le \e^{-1} \mathfrak{g}$ and $ \mathfrak{g}^{-1/2} \nabla f_0^\e \in L^2(\rr^3)$, 
for any given $\e\in (0,\e_0)$, $\e_0 > 0$ small enough 
(choose $f_0^\e := [ (\e \mathfrak g_{1-\e} + f_0) \wedge (\e^{-1} \mathfrak g_{1-\e})] *  \mathfrak g_{\e}$, where 
$\mathfrak g_{\theta}$,
is the centered Gaussian density with variance $\theta$).
Consider, for each $\e \in (0,\e_0)$, the cross section
 $B_\e(z,\sigma)\sin \theta=(\e^2+|z|^2)^{\gamma/2}(\beta(\theta)\land \e^{-1})$,
which satisfies
$$
M_\e:=\sup_{z \in \rr^3, \sigma \in \Sp_2} B_\e(z,\sigma)<\infty.
$$
Consider also an i.i.d. collection of Poisson measures 
$(\Pi^N_{ij}(\dd s,\dd \sigma,\dd z))_{1\leq i \neq j \leq N}$ on $\rr_+\times \Sp_2\times \rr_+$ with intensity measure
$\frac{1}{2(N-1)} \dd s \dd \sigma \dd z$ and 
an i.i.d. collection $(V_0^{i,\e})_{i=1,\dots,N}$ of $f_0^\e$-distributed random
variables independent of the Poisson measures. Set $\bV_0^{N,\e}=(V_0^{1,\e},\dots,V_0^{N,\e})$ and consider
the stochastic differential equation
\begin{align*}
\bV^{N,\e}_t=&\bV^{N,\e}_0 + \sum_{1\leq i\neq j \leq N} \int_0^t \int_{\Sp_2}\int_0^\infty 
[v'_{ij}(\bV^{N,\e}_{s-},\sigma)-\bV^{N,\e}_\sm]
\indiq_{\{z< B_\e(V^{i,N,\e}_\sm-V^{j,N,\e}_\sm,\sigma)\}}\Pi^N_{ij}(\dd s, \dd \sigma, \dd z).
\end{align*}
We can replace the integral $\int_0^\infty$ by $\int_0^{M_\e}$, so that this equation involves finite 
Poisson measures and is thus strongly well-posed: it can be solved by induction on the jump instants
of the Poisson measures.
We deduce from~\eqref{est7} that $(\bV^{N,\e}_t)_{t\geq 0}$ 
is strongly conservative: it satisfies~\eqref{sc}. This readily implies that
\begin{equation}\label{upn}
\sup_{t\geq 0} \E[|V^{1,N,\e}_t|^2]=m_2(f_0^\e)\leq 2 m_2(f_0).
\end{equation}
Using the Itô formula, we get,
for all measurable $\phi:\rdN\to \rr$,
\begin{align*}
\phi(\bV^{N,\e}_t)=&\phi(\bV^{N,\e}_0) + \sum_{1\leq i\neq j \leq N} \int_0^t \int_{\Sp_2}\int_0^\infty 
[\phi(v'_{ij}(\bV^{N,\e}_{s-},\sigma))-\phi(\bV^{N,\e}_\sm)]\\
& \hskip5cm \indiq_{\{z< B_\e(V^{i,N,\e}_\sm-V^{j,N,\e}_\sm,\sigma)\}}\Pi^N_{ij}(\dd s, \dd \sigma, \dd z).
\end{align*}
We now may take expectations if $\phi$ is bounded and get,
denoting by $F^{N,\e}_t$ the law of $\bV^{N,\e}_t$,
$$
\int_\rdN \phi(v) F^{N,\e}_t(\dd v) = \int_\rdN \phi(v) F^{N,\e}_0(\dd v) + \int_0^t \int_\rdN \cL_{N,\e} 
\phi (v) F^{N,\e}_s(\dd v) \dd s,
$$
with $\cL_{N,\e}$ defined as in~\eqref{LN}, replacing $B$ by $B_\e$ (here of course the principal 
value can be removed). Using  \eqref{edpF},
we immediately obtain that $(F^{N,\e}_t)_{t\geq 0}$ is a weak solution to 
$$
\partial_t F^{N,\e}_t = \cL_{N,\e}F^{N,\e}_t.
$$
The equation is linear with smooth coefficients and satisfies a weak maximum principle, from what we 
classically deduce that the solution $F^{N,\e}_t$ satisfies 
$\e^N \mathfrak{g}^{\otimes N}  \le F^{N,\e}_t  \le \e^{-N} \mathfrak g^{\otimes N}$ and 
$ (\mathfrak g^{-1/2})^{\otimes N} \nabla F^{N,\e}_t \in L^2(\rr^{3N})$,  uniformly in $t \in [0,T]$, 
for any given $N\geq 2$, $\e\in (0,\e_0]$, $T > 0$. The function $F^{N,\e}$ is thus smooth enough 
in order to justify the chain 
rule used during the proof of Proposition~\ref{prop:decay}. 
As a consequence, applying Proposition~\ref{prop:decay} as well as  Lemma~\ref{pfish}-(a), we get
\begin{equation}\label{deceps}
I_N(F^{N,\e}_t)\leq I_N(F^{N,\e}_0)=I_1(f_0^\e) \leq 2 I_1(f_0).
\end{equation} 
By Lemma~\ref{pfish}-(d), this implies that for all $a \in (-2,0)$,
\begin{equation}\label{momi}
\sup_{t\geq 0} \E[|V^{1,N,\e}_t-V^{2,N,\e}_t|^a]\leq 2 C_a I_1(f_0).
\end{equation}
Now, set $b_\e=\pi\int_0^\pi (1-\cos \theta) (\beta(\theta)\land\e^{-1})\dd \theta$  and observe that,
using (some $\e$-version of)~\eqref{est9},
\begin{align*}
\bV^{N,\e}_t=&\bV^N_0 + \sum_{1\leq i\neq j \leq N} \int_0^t \int_{\Sp_2}\int_0^\infty 
[v'_{ij}(\bV^{N,\e}_{s-},\sigma)-\bV^{N,\e}_\sm]
\indiq_{\{z< B_\e(V^{i,N,\e}_\sm-V^{j,N,\e}_\sm,\sigma)\}}\tilde \Pi^N_{ij}(\dd s, \dd \sigma, \dd z)\\
& + \frac{b_\e}{2(N-1)}\sum_{1\leq i \neq j \leq N}  \int_0^t (\e^2+|V^{i,N,\e}_s - V^{j,N,\e}_s|^2)^{\gamma/2} 
(V^{i,N,\e}_s - V^{j,N,\e}_s)(\be_j -\be_i ) \dd s, 
\end{align*}
where we recall that $\tilde \Pi^N_{ij}$ is defined in~\eqref{def:tildePiN}. 
Observe that for all $v \in \rdN$,
$$
(\e^2+|v_i - v_j|^2)^{\gamma/2} |v_i-v_j|\leq |v_i-v_j|^{\gamma+1}
$$ 
and, by~\eqref{est10},
\begin{gather*}
\int_{\Sp_2} |v'_{ij}(v,\sigma)-v|^2B_\e(v_i-v_j,\sigma) \dd \sigma
\leq 2b_\e  |v_i-v_j|^{\gamma+2} \leq 2b  |v_i-v_j|^{\gamma+2}.
\end{gather*}
Since $-2<1+\gamma<2+\gamma<2$, 
the tightness of the family $((\bV^{N,\e}_t)_{t\geq 0}, \e \in (0,\e_0))$ is easily deduced from~\eqref{momi} 
(using also exchangeability) and~\eqref{upn}.
See the proof Lemma~\ref{tight} for a very similar argument.
It is then standard to pass to the limit as $\e\to 0$ and to show the existence, 
through martingale problems, of a solution $(\bV^{N}_t)_{t\geq 0}$ to~\eqref{ks}.
 See the proof of Theorem~\ref{mr2}, of which we give all the details, for a similar but more complicated
demonstration.
The solution $(\bV^{N}_t)_{t\geq 0}$ of course inherits exchangeability,  
strong conservation~\eqref{sc}, which obviously implies that
$E_N(F^N_t)=E_N(F^N_0)=m_2(f_0)$, where $F^N_t$ is the law of $\bV^N_t$. We also have
$$
I_N(F^N_t)\leq \liminf_\e I_N(F^{N,\e}_t)\leq \liminf_\e I_1(f_0^\e)=I_1(f_0)
$$ 
by Lemma~\ref{pfish}-(e) and~\eqref{deceps}. Finally, using the Itô formula for~\eqref{ks} 
and taking expectations, we find that for all $\phi\in C^2_b(\rdN)$, all $t\geq 0$,
\begin{equation}
\int_\rdN \phi(v) F^N_t(\dd v) = \int_\rdN \phi(v) F^N_0(\dd v) + \int_0^t \int_\rdN \bar \cL_N\phi(v) F^N_s(\dd v)
\dd s, 
\end{equation}
where for $v \in \rdN$, $ \bar \cL_N\phi (v)$ is defined by~\eqref{est12}. 
We have $\cL_N\phi=\bar \cL_N\phi$ by Lemma~\ref{est}, and the proof is complete.
\end{proof}

\section{The nonlinear stochastic equation}\label{sec:NonlinearStochastic}

We introduce some stochastic process $(V_t)_{t\geq 0}$ valued in $\rr^3$, representing
the time-evolution of the velocity of a typical particle in the gas. This was initiated by 
Tanaka~\cite{MR512334} for Maxwell molecules and by Sznitman~\cite{MR753814} for more general cross sections.
This process should solve the jumping S.D.E. 
\begin{align}\label{edsnl}
V_t=&V_0+\int_0^t\int_{\rr^3}\int_{\Sp_2}\int_0^\infty (v'(V_\sm,v_*,\sigma)-V_\sm)
\indiq_{\{z<B(V_\sm-v_*,\sigma)\}}\tilde \Pi(\dd s, \dd v_*,\dd \sigma, \dd z)\\
&-b\int_0^t \int_{\rr^3}|V_s-v_*|^\gamma(V_s-v_*) f_s(\dd v_*) \dd s,\notag
\end{align}
with $f_t$ the law of $V_t$ and with $\Pi(\dd s, \dd v_*\dd \sigma, \dd z)$ a Poisson measure
on $\rr_+\times \rr^3\times \Sp_2\times \rr_+$ with intensity $\dd s f_s(\dd v_*) \dd \sigma \dd z$.
This equation is nonlinear in that the time marginals of the solution appear in its dynamics.

\vip

Actually, we will not directly deal with~\eqref{edsnl}, but rather with the corresponding
nonlinear martingale problem. We introduce, on the canonical space $\Omega^*=\D(\rr_+,\rr^3)$,
the canonical process $(V^*_t)_{t\geq 0}$ defined by $V^*_t(w)=w(t)$. We endow $\Omega^*$
with the Skorokhod topology and the associated $\sigma$-field $\cF^*=\sigma(V^*_t,t\geq 0)$
and filtration $\cF^*_t=\sigma(V_s^*,s\in [0,t])$. 

\vip

\begin{defi}\label{dfmp}
Assume that $B$ satisfies~\eqref{hyp0}-\eqref{hyp1} with some $\gamma\in (-3,0)$
and consider $f_0 \in \cP_2(\rr^3)$.
For  a probability measure $\QQ$ on $(\Omega^*,\cF^*,(\cF^*_t)_{t\geq 0})$ and $t\geq 0$, we set
$\QQ_t(\dd v)=\QQ(V^*_t \in \dd v)$. We say that 
$\QQ$ solves $MP(f_0)$ if  the following conditions hold: $\QQ_0=f_0$,  for all $T>0$,
\begin{gather}
\sup_{t\in [0,T]} m_2(\QQ_t)<\infty, \label{cop}\\
\int_0^T \int_{\rr^3}\int_{\rr^3} |v-v_*|^{\gamma+1} \QQ_s(\dd v_*)\QQ_s(\dd v)\dd s<\infty, \label{condp}
\end{gather}
and for all $\phi \in C^2_b(\rr^3)$, the process
$$
M^\phi_t=\phi(V^*_t)-\phi(V^*_0)-\int_0^t \int_{\rr^3}\bar \cA \phi(V^*_s,v_*) \QQ_s(\dd v_*)\dd s
$$
is a martingale under $\QQ$.
\end{defi}

Because of the estimate~\eqref{est5p1} and the condition $\gamma+1<\gamma+2<2$, it holds  
$$
\E_\QQ\Big[\int_0^t\int_{\rr^3} |\bar \cA \phi(V^*_s,v_*)| \QQ_s(\dd v_*) \dd s\Big]
=\int_0^t \int_{\rr^3}\int_{\rr^3} |\bar \cA \phi(v_s,v_*)| \QQ_s(\dd v_*) \QQ_s(\dd v)  \dd s 
<\infty,
$$ 
so that this definition makes sense.
A solution to the nonlinear martingale problem provides a solution to the Boltzmann equation. 

\begin{prop}\label{ev}
Assume that $B$ satisfies~\eqref{hyp0}-\eqref{hyp1} with some $\gamma\in (-3,0)$ 
and consider $f_0 \in \cP_2(\rr^3)$. For $\QQ$ solving $MP(f_0)$, $(\QQ_t)_{t\geq 0}$ is a weak solution
to the Boltzmann equation~\eqref{be} starting from $f_0$.
\end{prop}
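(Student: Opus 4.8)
The plan is to show that if $\QQ$ solves $MP(f_0)$, then $f_t := \QQ_t$ satisfies all the requirements of Definition~\ref{dfsol}. First I would verify the regularity and integrability conditions. Since $\QQ$ is a probability measure on $\D(\rr_+,\rr^3)$, the map $t\mapsto \QQ_t$ is automatically weakly càdlàg; to upgrade this to continuity (i.e. $f \in C([0,\infty);\cP(\rr^3))$) one uses that the canonical process under $\QQ$ has no deterministic jump times, which follows from the martingale property and the fact that the compensator $\int_0^t \int \bar\cA\phi(V^*_s,v_*)\QQ_s(\dd v_*)\dd s$ is a.s. continuous in $t$ (by the integrability bound displayed just after Definition~\ref{dfmp}); hence for each fixed $t$, $\QQ(V^*_t = V^*_{t-}) = 1$, which gives $\QQ_t = \QQ_{t-}$, i.e. weak continuity. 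Conditions \eqref{co} and \eqref{cond} of Definition~\ref{dfsol} are then immediate: \eqref{co} is exactly \eqref{cop}, and \eqref{cond} follows from \eqref{condp} together with \eqref{cop}, since $|v-v_*|^{2+\gamma}\leq |v-v_*|^{1+\gamma} + |v-v_*|^{2+\gamma}\indiq_{\{|v-v_*|\geq 1\}}$ and the second term is controlled by $1 + |v|^2 + |v_*|^2$ (because $2+\gamma<2$), whose $\QQ_s\otimes\QQ_s$-integral is bounded uniformly on $[0,T]$ by \eqref{cop}.

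Next I would derive the weak formulation \eqref{wbe}. Fix $\phi\in C^2_b(\rr^3)$. By Definition~\ref{dfmp}, $M^\phi_t = \phi(V^*_t) - \phi(V^*_0) - \int_0^t\int_{\rr^3}\bar\cA\phi(V^*_s,v_*)\QQ_s(\dd v_*)\dd s$ is a $\QQ$-martingale, and it is genuinely integrable thanks to the bound right after Definition~\ref{dfmp} and boundedness of $\phi$. Taking $\QQ$-expectations and using $\E_\QQ[M^\phi_t] = \E_\QQ[M^\phi_0] = 0$ yields
\begin{equation*}
\intrd \phi(v)\QQ_t(\dd v) = \intrd \phi(v)\QQ_0(\dd v) + \int_0^t \dd s \intrd \QQ_s(\dd v)\intrd \bar\cA\phi(v,v_*)\QQ_s(\dd v_*),
\end{equation*}
using Fubini (justified by the integrability bound) to write $\E_\QQ[\bar\cA\phi(V^*_s,v_*)] = \intrd \bar\cA\phi(v,v_*)\QQ_s(\dd v)$. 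Finally, by the symmetrization identity \eqref{est6}, for each $s$ the inner double integral against the symmetric measure $\QQ_s\otimes\QQ_s$ satisfies
\begin{equation*}
\intrd \QQ_s(\dd v)\intrd \bar\cA\phi(v,v_*)\QQ_s(\dd v_*) = \intrd \QQ_s(\dd v)\intrd \tfrac12\big(\bar\cA\phi(v,v_*)+\bar\cA\phi(v_*,v)\big)\QQ_s(\dd v_*) = \intrd \QQ_s(\dd v)\intrd \cA\phi(v,v_*)\QQ_s(\dd v_*),
\end{equation*}
which turns the identity above into exactly \eqref{wbe}. With $\QQ_0 = f_0$ this completes the verification that $(\QQ_t)_{t\geq 0}$ is a weak solution to \eqref{be} starting from $f_0$.

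The main obstacle, and the only step requiring real care rather than bookkeeping, is the passage from \emph{weakly} càdlàg to genuinely continuous, $f\in C([0,\infty);\cP(\rr^3))$: one must argue that no fixed time carries positive jump probability. I would handle this by noting that for fixed $t$, the pure-jump martingale $M^\phi$ has at most countably many jump times a.s., and the set of times that are jump times with positive probability is empty because the compensator is absolutely continuous in time with locally integrable density (the density being $\intrd \bar\cA\phi(V^*_s,v_*)\QQ_s(\dd v_*)$, integrable by the post-Definition bound); alternatively, the tightness-type moment estimate \eqref{cop} combined with uniform integrability gives equicontinuity of $t\mapsto \intrd\phi\,\QQ_t$ in a weaker-than-Skorokhod sense that, together with the càdlàg property, forces continuity. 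Everything else — Fubini applications, the integrability of $M^\phi$, the symmetrization — is routine given the estimates in Lemma~\ref{est} and the standing assumptions \eqref{cop}--\eqref{condp}.
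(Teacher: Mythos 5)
Your proof is correct and follows essentially the same route as the paper's: check \eqref{co}--\eqref{cond} from \eqref{cop}--\eqref{condp} (your small/large $|v-v_*|$ splitting is exactly the intended use of $\gamma+1<\gamma+2<2$), take $\E_\QQ[M^\phi_t]=0$, apply Fubini, and symmetrize via \eqref{est6}. The one place where you diverge is the continuity requirement $f\in C([0,\infty);\cP(\rr^3))$, which the paper silently omits and which you flag as ``the main obstacle'': here you overshoot. The pathwise statement $\QQ(V^*_t=V^*_{t-})=1$ for each fixed $t$ does \emph{not} follow merely from $M^\phi$ being a martingale with an absolutely continuous drift term (a martingale can perfectly well have a fixed jump time with a centered jump), and proving quasi-left-continuity would require identifying the predictable compensator of the jump measure, which is not available at this level of generality. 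Fortunately no pathwise statement is needed: once you have the identity $\intrd\phi\,\dd\QQ_t=\intrd\phi\,\dd\QQ_0+\int_0^t(\cdots)\dd s$ with a locally integrable integrand (guaranteed by the bound displayed after Definition~\ref{dfmp}), the map $t\mapsto\intrd\phi\,\dd\QQ_t$ is absolutely continuous for every $\phi\in C^2_b(\rr^3)$, and since such $\phi$ determine weak convergence in $\cP(\rr^3)$ this already gives $t\mapsto\QQ_t\in C([0,\infty);\cP(\rr^3))$. So the step you single out as delicate is in fact a one-line consequence of the identity you have already derived, and the rest of your argument matches the paper.
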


\begin{proof}[Proof of Proposition~\ref{ev}] 
From~\eqref{cop},~\eqref{condp} and  $\gamma+1<\gamma+2<2$, we have that  $(\QQ_t)_{t\geq 0}$
satisfies~\eqref{co} and~\eqref{cond}.
For $\phi \in C^2_b(\rr^3)$ and $t\geq 0$, we have $\E[M^\phi_t]=0$, 
i.e.
$$
\int_{\rr^3}\phi(v)\QQ_t(\dd v) - \int_{\rr^3}\phi(v)\QQ_0(\dd v) - \int_0^t  \int_{\rr^3} \int_{\rr^3}
\bar\cA\phi(v,v_*) \QQ_s(\dd v_*)\QQ_s(\dd v)\dd s=0.
$$
But  $\int_{\rr^3} \int_{\rr^3}\bar\cA\phi(v,v_*) \QQ_s(\dd v_*)\QQ_s(\dd v)= \int_{\rr^3} \int_{\rr^3}
\cA\phi(v,v_*) \QQ_s(\dd v_*)\QQ_s(\dd v)$ by~\eqref{est6}, which completes the proof.
\end{proof}

We also have the following uniqueness result.

\begin{prop}\label{unimp}
Assume that $B$ satisfies~\eqref{hyp0}-\eqref{hyp1} for some $\gamma\in (-2,0)$ and that 
$f_0 \in \cP_2(\rr^3)$ has a finite Fisher information. Then, there is at most one
solution $\QQ$ to $MP(f_0)$ such that $t \mapsto I_1(\QQ_t) \in L^1_{loc}([0,\infty))$.
\end{prop}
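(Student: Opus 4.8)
The plan is to reduce uniqueness for the nonlinear martingale problem $MP(f_0)$ to the uniqueness statement for weak solutions of the Boltzmann equation already recorded in Theorem~\ref{thm:exist&uniqBoltz}. The first step is to observe, via Proposition~\ref{ev}, that if $\QQ$ solves $MP(f_0)$ with $t\mapsto I_1(\QQ_t)\in L^1_{loc}([0,\infty))$, then $(\QQ_t)_{t\geq 0}$ is a weak solution to~\eqref{be} in the sense of Definition~\ref{dfsol} with the extra regularity $t\mapsto I_1(\QQ_t)\in L^1_{loc}$; by Lemma~\ref{pfish}-(b) this forces $\QQ_t\in L^3(\rr^3)$ for a.e.\ $t$ with $\|\QQ_t\|_{L^3}\in L^1_{loc}$, and Theorem~\ref{thm:exist&uniqBoltz} then shows that the family of time-marginals $(\QQ_t)_{t\geq 0}$ is uniquely determined: it must coincide with the unique weak solution $f=(f_t)_{t\geq 0}$ to~\eqref{be} issued from $f_0$. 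So two solutions $\QQ^1,\QQ^2$ of $MP(f_0)$ with integrable Fisher information necessarily have the same one-dimensional time-marginals, $\QQ^1_t=\QQ^2_t=f_t$ for all $t\geq 0$.

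The second, and genuinely harder, step is to upgrade equality of time-marginals to equality of the full laws $\QQ^1=\QQ^2$ on path space $\D(\rr_+,\rr^3)$. Here the point is that, once the marginal flow $(f_t)_{t\geq 0}$ is fixed, the martingale problem in Definition~\ref{dfmp} becomes \emph{linear}: $\QQ$ must be a solution to the (time-inhomogeneous) martingale problem associated with the generator $\phi\mapsto \int_{\rr^3}\bar\cA\phi(\cdot,v_*)f_s(\dd v_*)$, started from $f_0$. One then invokes well-posedness of this linearized martingale problem. The cleanest route is to exhibit a strong solution to the associated linear jump SDE, namely~\eqref{edsnl} with $f_s$ replaced by the now-known $f_s$: given a Poisson measure $\Pi$ with intensity $\dd s\, f_s(\dd v_*)\,\dd\sigma\,\dd z$ one constructs pathwise a solution $(V_t)_{t\geq 0}$, and checks — using~\eqref{est2}, \eqref{est4}, and the moment/Fisher bounds $\sup_{[0,T]}m_2(f_t)<\infty$ together with $\int_0^T\|f_t\|_{L^3}\,\dd t<\infty$ (which controls $\int\int |v-v_*|^{\gamma+1}f_t(\dd v_*)$ via Lemma~\ref{pfish}-(d)) — that the compensated-jump part is an $L^2$ martingale and the drift is absolutely convergent, so the SDE is strongly well-posed. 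Its law solves the linear martingale problem; conversely any solution of the linear martingale problem is shown to be a (weak) solution of that SDE, hence has the prescribed law by pathwise uniqueness. This yields $\QQ^1=\QQ^2$.

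Alternatively one can bypass the SDE and argue directly at the level of martingale problems: equality of one-dimensional marginals plus the martingale property, combined with a standard Markov/duality argument (approximating $\bar\cA$ by bounded generators, e.g.\ cutting the angular singularity as in the proof of Theorem~\ref{mr1}), propagates to equality of all finite-dimensional marginals and thus of the laws on $\D(\rr_+,\rr^3)$. Either way, the crux is the passage from the deterministic PDE-uniqueness (which is already available) to path-space uniqueness for the associated linear jump process; the technical burden is entirely in verifying the integrability needed to make the linear jump dynamics well-posed, and this is exactly where the hypothesis $t\mapsto I_1(\QQ_t)\in L^1_{loc}$ is used, through $\|\cdot\|_{L^3}\lesssim I_1(\cdot)$ and Lemma~\ref{pfish}-(d), to handle the singular factor $|v-v_*|^{\gamma+1}$ with $\gamma\in(-2,0)$.
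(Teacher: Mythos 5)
Your first step coincides exactly with the paper's: by Proposition~\ref{ev} and Theorem~\ref{thm:exist&uniqBoltz}, any two solutions of $MP(f_0)$ with locally integrable Fisher information have the same time-marginals $f_t$, and hence both solve the \emph{linear} martingale problem with frozen coefficient $\tilde\cA_t\phi(v)=\int_{\rr^3}\bar\cA\phi(v,v_*)f_t(\dd v_*)$. The divergence is in how you close the argument, and that is where your proposal has a genuine gap.

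You assert that the linearized jump SDE is ``strongly well-posed'' once one checks that the compensated-jump part is an $L^2$ martingale and that the drift converges absolutely. Integrability of the coefficients makes the stochastic integrals well defined (and, with some work, yields existence of a solution), but it gives no \emph{pathwise uniqueness}, which is what your route through the SDE requires. The jump mechanism is governed by the indicator $\indiq_{\{z<B(V_{s-}-v_*,\sigma)\}}$, which is discontinuous in the state variable, and the rate $|V_{s-}-v_*|^\gamma$ blows up as $V_{s-}\to v_*$; there is no Lipschitz (or even continuity) structure, so the standard strong-uniqueness theory for jump SDEs does not apply. Your alternative ``standard Markov/duality argument'' meets the same obstruction: passing to the limit from bounded approximations of $\bar\cA$ requires quantitative control on how two candidate solutions driven by the singular kernel can be coupled, which is precisely the nontrivial content. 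The paper does not reprove this step; it invokes~\cite[proof of Lemma~4.6, Steps~3--4--5]{MR2398952}, where uniqueness for the linear martingale problem is established, via a delicate coupling, under the regularity $(\QQ_t)_{t\geq 0}\in L^\infty_{loc}(\rr_+,\cP_2(\rr^3))\cap L^1_{loc}(\rr_+,J_\gamma)$ --- supplied here by $\|\QQ_t\|_{L^3}\leq C_0 I_1(\QQ_t)$ and $3>3/(3+\gamma)$ for $\gamma\in(-2,0)$. You have correctly located the crux of the proof, but the crux itself is asserted rather than proved.
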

 
\begin{proof}[Proof of Proposition~\ref{unimp}] 
Consider two solution $\QQ$, $\QQ'$ to $MP(f_0)$
such that the Fisher informations of both $(\QQ_t)_{t\geq 0}$ and $(\QQ'_t)_{t\geq 0}$ are locally integrable. 
By Proposition~\ref{ev} and Theorem~\ref{thm:exist&uniqBoltz}, we conclude that $\QQ_t=\QQ'_t$ for all $t\geq 0$.
\vip
Consequently, both $\QQ$ and $\QQ'$ solve the {\it linear} martingale problem $MP((\tilde\cA_t)_{t\geq 0},f_0)$, in
the notation of~\cite[proof of Lemma~4.6, Step~1]{MR2398952} where for $\phi \in C^2_c(\rr^3)$, $t\geq 0$ 
and $v\in \rr^3$,
$$
\tilde\cA_t \phi(v)= \int_{\rr^3} \bar\cA\phi(v,v_*) \QQ_t(\dd v_*).
$$
Note that $\tilde\cA \phi$ defined in~\cite[Eq.~(6.20)]{MR2398952}
equals our $\bar \cA \phi$. Using Lemma~\ref{pfish}-(b), we observe that 
$$
(\QQ_t)_{t\geq 0} \in L^\infty_{loc}(\rr_+,\cP_2(\rr^3))\cap L^1_{loc}(\rr_+,L^3(\rr^3))\subset 
L^\infty_{loc}(\rr_+,\cP_2(\rr^3))\cap L^1_{loc}(\rr_+,J_\gamma),
$$
see~\cite[Eq.~(1.12) and~(5.2)]{MR2398952} and use that $3>3/(3+\gamma)$. It is worth emphasizing that it is 
here we  really  need the  moderately soft potentials restriction  $\gamma \in (-2,0)$.
It is shown in~\cite[proof of Lemma~4.6, Steps~3-4-5]{MR2398952} that
under such a condition,  $MP((\tilde\cA_t)_{t\geq 0},f_0)$ has a most one solution, when $f_0$ is a Dirac mass,
but the proof uses only that $m_2(f_0)<\infty$. All in all, $\QQ=\QQ'$.
\end{proof}

\section{From the Kac system to the nonlinear stochastic equation}\label{sec:KACtoNL}

In this section, we mix ideas taken from the seminal papers of Sznitman~\cite{MR1108185,MR753814} and
Méléard~\cite{MR1431299} on the chaos issue for the Boltzmann equation with ideas coming from \cite{MR3254330} 
that concerns 
singular interactions (for continuous paths).
We aim to establish that the $N$-particles stochastic 
trajectories converge in law to solutions of the nonlinear martingale problem.
Below, $\cP(\D(\rr_+,\rr^3))$ is endowed with the
weak convergence topology, $\D(\rr_+,\rr^3)$ being endowed with the Skorokhod topology. We refer to 
Jacod-Shiryaev~\cite[Section~VI]{MR1943877} for a detailed account of this topology.
 We recall in particular that for each $t\geq 0$, the map $\pi_t : \D(\rr_+,\rr^3) \to \rr^3$ defined 
by $\pi_t(x)=x(t)$ is continuous at any point $x$ such that $\Delta x(t)=0$. 

\begin{thm}\label{mr2}
Assume~\eqref{hyp0}-\eqref{hyp1} with some $\gamma \in (-2,0)$.
Consider a probability density $f_0$ on $\rr^3$ such that $m_2(f_0)$ and $I_1(f_0)$ are finite and, 
for each $N\geq 2$, the process $(\bV^N_t)_{t\geq 0}$ built in Theorem~\ref{mr1}. 
Set $\mu^N=\frac1N \sum_{i=1}^N
\delta_{(V^{i,N}_t)_{t\geq 0}}$, which is a random probability measure on $\D(\rr_+,\rr^3)$. 
\vip
(i) The family $(\mu^N,N\geq 2)$ is tight in $\cP(\D(\rr_+,\rr^3))$.
\vip
(ii) Any (possibly random) limit point $\mu \in \cP(D(\rr_+,\rr^3))$ of $(\mu^N,N\geq 2)$ a.s.
solves $MP(f_0)$ and we have $\sup_{t\geq 0} \E[I_1(\mu_t)] \leq I_1(f_0)$.
\vip
(iii) The sequence $(\mu^N)_{N\geq 2}$ converges in probability, as $N\to \infty$, to the unique 
solution $\QQ$ of $MP(f_0)$ such that $t \mapsto I_1(\QQ_t) \in L^1_{loc}([0,\infty))$.
\end{thm}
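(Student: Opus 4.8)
The plan is to prove the three points of Theorem~\ref{mr2} in order, since (iii) relies on (i) and (ii).

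\textbf{Proof of (i): tightness.} First I would show that the laws of the individual trajectories $(V^{1,N}_t)_{t\geq 0}$ are tight in $\cP(\D(\rr_+,\rr^3))$, and then invoke exchangeability together with the classical criterion (see Sznitman~\cite{MR1108185}) that tightness of $\mu^N$ on $\cP(\D(\rr_+,\rr^3))$ is equivalent to tightness of the law of one coordinate. For the latter, I would decompose $\bV^N$ as in~\eqref{ks} into a martingale part (the compensated Poisson integrals) and a finite-variation drift. The drift is controlled using~\eqref{momi}, i.e.\ the uniform bound $\sup_N\sup_t\E[|V^{1,N}_t-V^{2,N}_t|^a]\leq 2C_aI_1(f_0)$ valid for all $a\in(-2,0)$, which handles the singular factor $|v_i-v_j|^{1+\gamma}$ since $1+\gamma>-1>-2$; the quadratic variation of the martingale part is controlled similarly using~\eqref{est10} and again~\eqref{momi} since $2+\gamma>-2$. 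One then applies the Aldous criterion to obtain tightness of one coordinate's law; the bound~\eqref{upn} controls the second moments and gives tightness of the initial data. The key quantitative input throughout is the Fisher information bound from~\eqref{eq:EnergyN}–\eqref{momi}.

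\textbf{Proof of (ii): identification of limit points.} Let $\mu$ be a limit point, realized as an a.s.\ limit along a subsequence on a suitable probability space (Skorokhod representation). The bound $\sup_{t\geq 0}\E[I_1(\mu_t)]\leq I_1(f_0)$ follows immediately from Lemma~\ref{pfish}-(f) applied at each fixed time $t$, together with~\eqref{eq:EnergyN}; more precisely one should check that $\mu^N_t \Rightarrow \mu_t$ in the sense needed there, which holds at all $t$ outside an at most countable set of times where jumps may accumulate, and then extends to all $t$ by continuity of $t\mapsto \mu_t$ in $\cP(\rr^3)$ (a consequence of~\eqref{cop}-type moment estimates, which pass to the limit). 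To show $\mu$ a.s.\ solves $MP(f_0)$, I would follow the standard martingale-problem route: for $\phi\in C^2_b(\rr^3)$, $0\leq s_1\leq\dots\leq s_k\leq s\leq t$, continuous bounded $\psi:(\rr^3)^k\to\rr$, define the functional
\begin{equation*}
\Psi(\nu)=\int\Big(\phi(w_t)-\phi(w_s)-\int_s^t\int_{\rr^3}\bar\cA\phi(w_r,v_*)\nu_r(\dd v_*)\dd r\Big)\psi(w_{s_1},\dots,w_{s_k})\nu(\dd w)
\end{equation*}
and show $\E[|\Psi(\mu)|]=0$. One writes $\E[\Psi(\mu^N)]$ in terms of the particle system and uses the Itô formula for~\eqref{ks}; the martingale terms vanish in expectation and the remaining error (coming from the $\frac1{N-1}$ "self-interaction" $i=j$ forbidden terms and from replacing empirical by true marginals) is $O(1/N)$ using~\eqref{est11} and~\eqref{momi}. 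The delicate point is passing to the limit in $\Psi(\mu^N)\to\Psi(\mu)$: the map $\nu\mapsto\Psi(\nu)$ is not continuous because $\bar\cA\phi(v,v_*)$ blows up like $|v-v_*|^{1+\gamma}$ near the diagonal. This is exactly where the uniform Fisher information bound is used again: by Lemma~\ref{pfish}-(d) (or a level-$3$ version as in~\cite{MR3188710}) one gets, for $p<2$ slightly above $1$, a uniform bound on $\E\int_0^T\int\int|v-v_*|^{(1+\gamma)p}\mu^N_s(\dd v)\mu^N_s(\dd v_*)\dd s$, which, since $(1+\gamma)p>-2$, provides uniform integrability allowing truncation of the singularity and passage to the limit. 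The conditions~\eqref{cop}-\eqref{condp} for $\mu$ itself follow by lower semicontinuity from the corresponding uniform estimates on $\mu^N$, again using $\gamma+1>-2$ and Lemma~\ref{pfish}-(d); in particular~\eqref{condp} requires $\sup_{t}\E[I_1(\mu_t)]<\infty$, hence $t\mapsto I_1(\mu_t)\in L^1_{loc}$ a.s.

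\textbf{Proof of (iii): convergence in probability.} By (ii), every limit point $\mu$ of $(\mu^N)$ is a.s.\ a solution of $MP(f_0)$ with locally integrable Fisher information, hence by Proposition~\ref{unimp} equals the \emph{deterministic} measure $\QQ$ almost surely. Thus all limit points coincide with the constant $\QQ$; combined with tightness from (i), this forces $\mu^N\to\QQ$ in probability in $\cP(\D(\rr_+,\rr^3))$. The only subtlety is to make sure the a.s.\ statement "$\mu$ solves $MP(f_0)$" really does pin down $\mu=\QQ$ as a random variable and not merely its law: since $\QQ$ is deterministic, convergence of the laws of $\mu^N$ to $\delta_\QQ$ is equivalent to convergence in probability, which is the claim. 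The main obstacle in the whole argument is the non-continuity issue in step (ii) caused by the singular kernel, and the resolution is precisely the propagation of the Fisher information bound established in Proposition~\ref{prop:decay}; everything else is a fairly standard adaptation of Sznitman's and Méléard's martingale-problem scheme combined with the tools of~\cite{MR3254330,MR3188710}.
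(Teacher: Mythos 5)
Your overall architecture coincides with the paper's: tightness of one coordinate via the Aldous criterion and the decomposition of \eqref{ks} into compensated Poisson integrals plus drift, with the singular terms controlled by Lemma~\ref{pfish}-(d) through the uniform Fisher bound; then identification of limit points by testing $\mu\otimes\mu$ against the martingale-problem functionals, with the non-continuity of $\bar\cA\phi$ handled by truncation and uniform integrability from the same Fisher bound (choosing $p>1$ with $p(\gamma+1)>-2$); then uniqueness from Proposition~\ref{unimp} and the deterministic-limit argument for convergence in probability. All of this is the paper's proof.

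There is, however, one genuine gap in your step (ii). You propose to compute $\E[\Psi(\mu^N)]$ and observe that ``the martingale terms vanish in expectation,'' with the remaining errors $O(1/N)$. That only yields $\E[\Psi(\mu^N)]\to 0$, hence at best $\E[\Psi(\mu)]=0$ after passing to the limit. Since the limit point $\mu$ is \emph{random}, $\E[\Psi(\mu)]=0$ does not give $\Psi(\mu)=0$ a.s., which is what is needed for $\mu$ to a.s.\ solve $MP(f_0)$ (and indeed you correctly state the target as $\E[|\Psi(\mu)|]=0$, which your first-moment computation does not reach). The missing ingredient is a quantitative law of large numbers for the martingale part: writing $\langle\mu^N\odot\mu^N,\Phi\rangle$ via It\^o's formula, the martingale contribution is $\frac1N\sum_i\bigl(\prod_\ell\phi_\ell(V^{i,N}_{s_\ell})\bigr)M^{i,N}_{s,t}$, and one must show its \emph{second} moment tends to $0$. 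This follows because the compensated Poisson integrals $M^{i,N}_{s,t}$ attached to distinct particles are orthogonal (the Poisson measures $\Pi^N_{ij}$ are independent), so the variance of the average is $\frac{C}{N}\E[(M^{1,N}_{s,t})^2]$, which is $O(1/N)$ by \eqref{est4} and \eqref{tax2}. This is exactly the computation of $\E[A_N^2]$ and $\E[B_N^2]$ in the paper's Step~2.6, and it is the step that upgrades convergence in mean to the almost-sure statement about the random limit; without it the identification of limit points does not close. The rest of your sketch (the truncation $\Phi_A$, the control of $\E[|\langle\mu^N\odot\mu^N,\Phi-\Phi_A\rangle|]$, the handling of the at most countable/Lebesgue-null set of bad times, and point (iii)) is sound and matches the paper.
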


As recalled in Subsection~\ref{subsec:propgation-chaos}, point (iii) tells us that
the family $((\bV^N_t)_{t\geq 0}, N\geq 2)$ is $\QQ$-chaotic.
We start with a tightness result.

\begin{lemma}\label{tight}
Assume~\eqref{hyp0}-\eqref{hyp1} with some $\gamma\in (-2,0)$, 
fix $f_0 \in \cP_2(\rr^3)$ such that $I_1(f_0)<\infty$ and consider, 
for each $N\geq2$, the exchangeable process $(\bV^N_t)_{t\geq 0}$ built in Theorem~\ref{mr1}.
The family $((V^{1,N}_t)_{t\geq 0}, N\geq 2)$ is tight in $\D(\rr_+,\rr^3)$.
\end{lemma}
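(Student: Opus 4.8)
The plan is to establish tightness in $\D(\rr_+,\rr^3)$ via the Aldous criterion, which for a jumping process requires (a) tightness of the one-time marginals $(V^{1,N}_t)_{N\geq 2}$ for each $t$, and (b) a uniform control of increments over small time intervals, of the form $\E[|V^{1,N}_{\tau+\delta}\wedge T - V^{1,N}_\tau|\wedge 1]\to 0$ uniformly in $N$ and in stopping times $\tau\leq T$, as $\delta\to 0$. The first point is immediate from the uniform energy bound $\sup_{t\geq 0}\E[|V^{1,N}_t|^2]=m_2(f_0)<\infty$ given in~\eqref{eq:EnergyN}, which gives tightness (even uniform integrability) of the marginals via Markov's inequality.

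For point (b), I would start from the equation~\eqref{ks} written for the first coordinate $V^{1,N}_t$, which consists of a compensated Poisson (martingale) part $M^N_t$ coming from the jumps involving particle $1$, plus a finite-variation drift $D^N_t$. For the drift, using exchangeability and the elementary bound $(\e^2+|v_i-v_j|^2)^{\gamma/2}|v_i-v_j|\leq |v_i-v_j|^{1+\gamma}$ applied at $\e=0$, one gets
\begin{equation*}
\E\Big[\Big|\int_\tau^{\tau+\delta} \frac{b}{2(N-1)}\sum_{j\neq 1}|V^{1,N}_s-V^{j,N}_s|^\gamma(V^{1,N}_s-V^{j,N}_s)\dd s\Big|\Big]
\leq \frac{b}{2}\int_\tau^{\tau+\delta} \E\big[|V^{1,N}_s-V^{2,N}_s|^{1+\gamma}\big]\dd s,
\end{equation*}
and since $1+\gamma\in(-1,1)\subset(-2,0)$ when $\gamma\in(-2,-1)$ — and $1+\gamma\in[0,1)$ is trivially bounded by $1+\E[|V^{1,N}_s-V^{2,N}_s|^2]$ when $\gamma\in[-1,0)$ — Lemma~\ref{pfish}-(d) together with~\eqref{eq:EnergyN} yields $\E[|V^{1,N}_s-V^{2,N}_s|^{1+\gamma}]\leq C(1+I_1(f_0))$ uniformly in $N$ and $s$. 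Hence the drift increment is $\leq C\delta$. For the martingale part, I would bound $\E[|M^N_{\tau+\delta}-M^N_\tau|^2]$ by its predictable bracket, which by~\eqref{est10} (again at $\e=0$) and exchangeability is controlled by $\frac{1}{N-1}\sum_{j\neq 1}\int_\tau^{\tau+\delta} 2b|V^{1,N}_s-V^{j,N}_s|^{2+\gamma}\dd s$, so $\E[|M^N_{\tau+\delta}-M^N_\tau|^2]\leq C\int_\tau^{\tau+\delta}\E[|V^{1,N}_s-V^{2,N}_s|^{2+\gamma}]\dd s\leq C\delta(1+I_1(f_0))$, using $2+\gamma\in(0,2)$ and arguing as for the drift. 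Combining, $\E[|V^{1,N}_{\tau+\delta}-V^{1,N}_\tau|\wedge 1]\leq C\delta+C\sqrt{\delta}\to 0$ uniformly, which is Aldous's condition.

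The main obstacle — and the only place where the Fisher information bound is genuinely needed — is controlling the negative moments $\E[|V^{1,N}_s-V^{2,N}_s|^a]$ for $a\in(-2,0)$ uniformly in $N$ and $s$, since the interaction kernel $|v-v_*|^\gamma$ is singular on the diagonal. This is exactly what Lemma~\ref{pfish}-(d) provides, fed by the uniform-in-$N$ Fisher information bound $I_N(F^N_t)\leq I_1(f_0)$ from~\eqref{eq:EnergyN} (itself a consequence of Proposition~\ref{prop:decay}). Once this negative-moment estimate is in hand, the rest is the routine Aldous/Rebolledo machinery for semimartingales with jumps; I would conclude by invoking the Aldous tightness criterion (see Jacod–Shiryaev~\cite{MR1943877}) to deduce tightness of $((V^{1,N}_t)_{t\geq 0})_{N\geq 2}$ in $\D(\rr_+,\rr^3)$ endowed with the Skorokhod topology.
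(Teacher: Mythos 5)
Your overall route is the same as the paper's: decompose $V^{1,N}$ into the compensated--Poisson (martingale) parts and the finite--variation drift, control the singular interaction through the negative moments $\E[|V^{1,N}_s-V^{2,N}_s|^a]$, $a\in(-2,0)$, supplied by Lemma~\ref{pfish}-(d) and the uniform Fisher bound, and conclude by Aldous. The only structural difference is cosmetic: the paper treats the drift $Z^N$ separately (Hölder-$\tfrac12$ modulus and Ascoli, hence tightness in $C([0,T],\rr^3)$) and applies Aldous only to the two martingale pieces, while you apply Aldous to the whole process.

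There is, however, one genuine flaw in your increment estimates: you treat the stopping time $\tau$ as if it were deterministic. The inequality
$$
\E\Big[\int_\tau^{\tau+\delta} g(s)\,\dd s\Big]\;\le\;\delta\,\sup_{s\le T}\E[g(s)]
$$
is false in general for a random $\tau$ (the time integral and the expectation cannot be interchanged when the limits of integration are random, and $\tau$ may well concentrate on the times where $g$ is large), and the Aldous criterion genuinely requires stopping times -- this is the whole point of the criterion. You use this interchange both for the drift (with $g(s)=|V^{1,N}_s-V^{2,N}_s|^{1+\gamma}$) and for the martingale bracket (with $g(s)\sim|V^{1,N}_s-V^{2,N}_s|^{2+\gamma}$). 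The fix is standard and is exactly what the paper does: apply Hölder in the time variable \emph{before} taking expectations, e.g.
$$
\int_\tau^{\tau+\delta} g(s)\,\dd s\;\le\;\delta^{1/p'}\Big(\int_0^{T+\delta} g(s)^{p}\,\dd s\Big)^{1/p},
$$
with $p=2$ for the drift (requiring the $2(1+\gamma)$-moment, still in $(-2,2)$ since $\gamma>-2$) and $p=2/(2+\gamma)$ for the bracket (reducing to the energy bound~\eqref{tax2}). This degrades your rates from $\delta$ to $\delta^{1/2}$ and $\delta^{|\gamma|/2}$, which still vanish as $\delta\to0$, so the conclusion survives; but as written the step is not justified. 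A second, milder point: Jacod--Shiryaev's version of the Aldous criterion asks for control of $\sup_{t\le T}|V^{1,N}_t|$, not merely of the one-time marginals; either invoke Aldous's original formulation (marginal tightness on a dense set suffices) or note that Doob's inequality on the martingale parts and the Hölder bound on the drift give the sup-bound directly, as the paper does.
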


\begin{proof}[Proof of Lemma~\ref{tight}]
We fix $T>0$ and show that $((V^{1,N}_t)_{t\in [0,T]}, N\geq 2)$ 
is tight in $\D([0,T],\rr^3)$, which classically suffices.
We have $\sup_{N\geq 2} \sup_{t\geq 0} I_N(F^N_t)\leq I_1(f_0)$ by~\eqref{eq:EnergyN}, which implies 
that for all $a\in (-2,0)$,
\begin{equation}\label{tax}
\sup_{N\geq 2} \sup_{t\geq 0} \E[|V^{1,N}_t-V^{2,N}_t|^{a}]\leq C_a I_1(f_0), 
\end{equation}
by Lemma~\ref{pfish}-(d). Moreover, thanks to~\eqref{sc}, we have
\begin{equation}\label{tax2}
\sup_{N\geq 2} \E \Big[\sup_{t\geq 0}\frac 1 N \sum_{i=1}^N |V^{i,N}_t|^2\Big]
=m_2(f_0) \quad \text{and} \quad \sup_{N\geq 2} \sup_{t\geq 0}\E [|V^{1,N}_t|^2]=m_2(f_0).
\end{equation}
Recalling~\eqref{ks}, we write $V^{1,N}_t=V^1_0+X^{N}_t+Y^{N}_t+Z^{N}_t$, where
\begin{align*}
X^{N}_t=& \sum_{j=2}^N \int_0^t \int_{\Sp_2}\int_0^\infty [v'(V^{1,N}_\sm,V^{j,N}_\sm,\sigma)-V^{1,N}_\sm] 
\indiq_{\{z<B(V^{1,N}_\sm-V^{j,N}_\sm,\sigma)\}}\tilde\Pi^N_{1j}(\dd s,\dd \sigma,\dd z) \\
Y^{N}_t=& \sum_{j=2}^N \int_0^t \int_{\Sp_2}\int_0^\infty [v'_*(V^{j,N}_\sm,V^{1,N}_\sm,\sigma)-V^{1,N}_\sm] 
\indiq_{\{z<B(V^{j,N}_\sm-V^{1,N}_\sm,\sigma)\}}\tilde\Pi^N_{j1}(\dd s,\dd\sigma,\dd z),\\
Z^{N}_t=&\frac{b}{N-1} \sum_{j=2}^N \int_0^t | V^{1,N}_s-V^{j,N}_s|^\gamma(V^{j,N}_s-V^{1,N}_s) \dd s.
\end{align*}

Let us show that $((Z^{N}_t)_{t\geq 0}, N\geq 2)$ is tight in $C([0,T],\rr^3)$.
We write, for all $0\leq s \leq t \leq T$,
$$
|Z^{N}_t-Z^{N}_s|\leq \frac b{N-1} \sum_{j=2}^N \int_s^t |V^{1,N}_u-V^{j,N}_u|^{\gamma+1} \dd u \leq 
\sqrt{t-s} \sqrt{U_{N,T}},
$$
where 
$$
U_{N,T}=\frac{b^2}{N-1}\sum_{j=2}^N\int_0^T |V^{1,N}_u-V^{j,N}_u|^{2(\gamma+1)} \dd u.
$$
By exchangeability and~\eqref{tax2} if $\gamma\in [-1,0)$ (whence $2(\gamma+1)\in [0,2)$)
or~\eqref{tax} if $\gamma\in (-2,-1)$ (whence $2(\gamma+1)\in (-2,0)$), we see that
$$
C_T:=\sup_{N\geq 2} \E[U_{N,T}] <\infty.
$$
For $A>0$, let 
$$
K_{A,T}=\Big\{x\in C([0,T],\rr^3): x(0)=0, \sup_{0\leq s<t\leq T} \frac{|x(t)-x(s)|}
{|t-s|^{1/2}} \leq A\Big\}, 
$$
which is compact in  $C([0,T],\rr^3)$ by the Ascoli theorem. It holds that
$$
\PP \Big( (Z^{N}_t)_{t\in [0,T]} \notin K_{A,T}\Big) \leq \PP(U_{N,T}^{1/2} \geq A) \leq \frac1{A^2}\E[U_{N,T}]
\leq \frac{C_T}{A^2}.
$$
Thus $\lim_{A\to \infty} \sup_{N\geq 2}\PP( (Z^{N}_t)_{t\in [0,T]} \notin K_{A,T})=0$, 
so that $((Z^{N}_t)_{t\geq 0}, N\geq 2)$ is tight in $C([0,T],\rr^3)$.
\vip

For $((X^{N}_t)_{t\geq 0}, N\geq 2)$ to be tight in $\D([0,T],\rr^3)$, it suffices, by the Aldous 
criterion~\cite{aldous}, see also Jacod-Shiryaev~\cite[Section~VI, Thm.~4.5]{MR1943877}, that
\vip
\noindent (i) $\sup_{N\geq 2} \E[\sup_{[0,T]} |X^N_t|^2] <\infty$,
\vip
\noindent (ii) $\lim_{\delta\to 0}\sup_{N\geq 2} \sup_{(S,S')\in \cA^N_{T,\delta}}\E[|X^N_{S'}-X^N_S|^2]=0$,
where $\cA^N_{T,\delta}$ is the set of couples $(S,S')$ of stopping times such that a.s.,
$0\leq S \leq S' \leq S+\delta \leq T$.

\vip
To check (i), we use Doob's inequality and exchangeability to write
\begin{align*}
\E\Big[\sup_{[0,T]} |X^N_t|^2\Big]\leq& 4\E[|X^N_T|^2]\\
=&\frac{2}{N-1} \sum_{j=2}^N \int_0^T \int_{\Sp_2} 
\E\Big[|v'(V^{1,N}_s,V^{j,N}_s,\sigma)-V^{1,N}_s|^2 B(V^{1,N}_s-V^{j,N}_s,\sigma)\Big] \dd \sigma \dd s\\
=& 2 \int_0^T \int_{\Sp_2} 
\E\Big[|v'(V^{1,N}_s,V^{2,N}_s,\sigma)-V^{1,N}_s|^2 B(V^{1,N}_s-V^{2,N}_s,\sigma)\Big] \dd \sigma \dd s.
\end{align*}
Using~\eqref{est4}, we conclude that
$$
\E\Big[\sup_{[0,T]} |X^N_t|^2\Big]\leq 2 b \int_0^T \E[|V^{1,N}_s-V^{2,N}_s|^{2+\gamma}] \dd s,
$$
so that (i) follows from~\eqref{tax2} (because $\gamma+2\in (0,2)$).

\vip
We next verify (ii). For $(S,S')\in \cA^N_{T,\delta}$, we have, by~\eqref{est4} again,
\begin{align*}
\E[|X^N_{S'}-X^N_S|^2]\leq &\frac{1}{2(N-1)} \sum_{j=2}^N \E\Big[\int_S^{S+\delta} \int_{\Sp_2} 
|v'(V^{1,N}_s,V^{j,N}_s,\sigma)-V^{1,N}_s|^2 B(V^{1,N}_s-V^{j,N}_s,\sigma) \dd \sigma \dd s\Big]\\
=& \frac{b}{2(N-1)} \sum_{j=2}^N \E\Big[\int_S^{S+\delta}
|V^{1,N}_s-V^{j,N}_s|^{\gamma+2} \dd s\Big].
\end{align*}
We use Hölder's inequality with $p=2/(2+\gamma)$, whence $p'=2/|\gamma|$, to write
\begin{align*}
\E[|X^N_{S'}-X^N_S|^2]\leq & \frac b2
\delta^{|\gamma|/2} \Big(\E\Big[\frac1{N-1}\sum_{j=2}^N \int_0^T |V^{1,N}_s-V^{j,N}_s|^2\dd s
\Big]\Big)^{(\gamma+2)/2},
\end{align*}
so that $\E[|X^N_{S'}-X^N_S|^2]\leq C_T \delta^{|\gamma|/2}$ for some constant $C_T$ by~\eqref{tax2}, and (ii) follows.
\vip

The tightness of $((Y^{N}_t)_{t\geq 0}, N\geq 2)$ in $\D([0,T],\rr^3)$ is shown similarly.
\end{proof}

We can finally give the

\begin{proof}[Proof of Theorem~\ref{mr2}] We divide the proof in several steps.
We recall that $(V^*_t)_{t\geq 0}$ is the canonical process on $\Omega^*=\D(\rr_+,\rr^3)$,
endowed with its canonical $\sigma$-field $\cF^*$ and canonical filtration $(\cF^*_t)_{t\geq 0}$.

\vip

{\it Step 1.}
By Lemma~\ref{tight} and Sznitman~\cite[Proposition~2.2]{MR1108185}, the family
$$
\Big(\mu^N=\frac1N \sum_{i=1}^N \delta_{V^{i,N}},N\geq 2\Big)
$$ 
is tight in $\cP(\D(\rr_+,\rr^3))$, where $V^{i,N}=(V^{i,N}_t)_{t\geq 0}$. This show (i).

\vip

{\it Step 2.} We next consider a (non relabeled) subsequence $\mu^N$ converging in law, in
$\cP(\D(\rr_+,\rr^3))$, to some (possible random) $\mu$ and we show that
$\mu$ a.s. solves $MP(f_0)$ and that for all $t\geq 0$, $\E[I_1(\mu_t)]\leq I_1(f_0)$. This will prove (ii).
Observe at once that for each $t\geq 0$,
$$
\mu^N_t(\dd v)=\frac1N \sum_{i=1}^N \delta_{V^{i,N}}(V^*_t \in \dd v)= \frac1N \sum_{i=1}^N \delta_{V^{i,N}_t}(\dd v).
$$
The convergence in law of $\mu^N$ to $\mu$ in $\cP(\D(\rr_+,\rr^3))$ implies that   both
$$
\mu^N\otimes\mu^N=\frac 1{N^2} \sum_{1\leq i,j\leq N} \delta_{(V^{i,N},V^{j,N})}\quad
\text{and} \quad \mu^N\odot\mu^N=\frac 1{N(N-1)} \sum_{1\leq i\neq j\leq N} \delta_{(V^{i,N},V^{j,N})}
$$
converge in law to $\mu\otimes\mu$ in $\cP(\D(\rr_+,\rr^3\times\rr^3))$.
\vip

{\it Step 2.1.} Fix $t\geq 0$. For $F^N_t$ the law of $\bV^N_t$, we have
$I_N(F^N_t)\leq I_1(f_0)$, see Theorem~\ref{mr1}. Since $\mu^N_t$ converges in law to $\mu_t$,
we deduce from 
Lemma~\ref{pfish}-(f) that $\E[I_1(\mu_t)]\leq I_1(f_0)$. 
\vip
Moreover, by Lemma~\ref{pfish}-(d)-(a),
we conclude that for any $T>0$, any $a \in (-2,0)$,
$$
\E\Big[\int_0^T \!\int_{\rr^3}\!\int_{\rr^3}\! |v-v_*|^{a} \mu_t(\dd v) \mu_t(\dd v_*)\dd t\Big]
\leq C_{a}\E\Big[\int_0^T\! (1+I_2(\mu_t^{\otimes 2}))\dd t\Big]
= C_{a}\E\Big[\int_0^T\! (1+I_1(\mu_t))\dd t\Big]<\infty.
$$

{\it Step 2.2.} We now show that $J$ is Lebesgue-null, where
$$
J=\{u\geq 0 : \PP(\mu(\Delta V^*_u\neq 0))>0\}.
$$
Recall that for all $x\in \D(\rr_+,\rr^3)$, $D(x)=\{u\geq 0 : \Delta x(u)\ne 0\}$
is at most countable, write $J=\cup_{n\in \nn_*} J_n$, where $J_n=\{u\geq 0 : \PP(\mu(u\in D(V^*))\geq 1/n\}$, and
$$
\int_0^\infty \indiq_{\{u\in J_n\}}\dd u 
\leq \int_0^\infty n  \PP(\mu(u \in D(V^*))) \dd u =n \E\Big[ \E_\mu\Big(
\int_0^\infty \indiq_{\{u\in D(V^*)\}}\dd u \Big)\Big]=0.
$$

{\it Step 2.3.} We now show~\eqref{cop} and, more precisely, that
\begin{equation*}
\text{a.s., for all $t\geq 0$,} \quad m_2(\mu_t) \leq m_2(f_0).
\end{equation*}

Fix $A>0$, $T>0$ and consider $\Phi_{A,T}: \cP(\D(\rr_+,\rr^3))\to \rr_+$ defined by 
$$
\Phi_{A,T}(\QQ)=\sup_{t\in [0,T]} \Big(\int_{\rr^3} (|v|^2\land A)\QQ_t(\dd v)-m_2(f_0)\Big)_+\land 1,
$$
which is bounded and continuous at any point $\QQ$ such that $t\mapsto \int_{\rr^3} (|v|^2\land A)\QQ_t(\dd v)$
has no jump at time $T$. But for all $T \in \rr_+\setminus J$,  
$t\mapsto \int_{\rr^3} (|v|^2\land A)\mu_t(\dd v)=\E_{\mu}[|V_t^*|^2\land A]$
a.s. has no jump at time $T$.
By definition of the convergence in law, we deduce that for all $T \in \rr_+\setminus J$,
\begin{align*}
\E[\Phi_{A,T}(\mu)]=&\lim_{N\to \infty} \E[\Phi_{A,T}(\mu^N)]\\
\leq & \lim_{N\to \infty}\E\Big[\sup_{t\in [0,T]}(m_2(\mu^N_t)-m_2(f_0))_+\land 1\Big]\\
\leq& \lim_{N\to \infty}\E\Big[\sup_{t\in [0,T]}(m_2(\mu^N_t)-m_2(\mu^N_0))_+\land 1\Big] + \lim_{N\to \infty} 
\E[(m_2(\mu^N_0)-m_2(f_0))_+\land 1 ],
\end{align*}
since $(x+y)_+\land 1 \leq x_+ \land 1 + y_+ \land 1$ for any $x,y \in\rr$.
The first limit equals $0$ by~\eqref{sc}.
Moreover, the law of large numbers tells us that $m_2(\mu^N_0)=\frac1N\sum_{i=1}^N |V^i_0|^2$
a.s. tends to $m_2(f_0)$,
so that the second limit is also $0$. Thus $\Phi_{A,T}((\mu_t)_{t\geq 0})=0$ a.s., showing
that a.s., for all $t\in [0,T]$, $\int_{\rr^3} (|v|^2\land A) \mu_t(\dd v) \leq m_2(f_0)$.
Since $A>0$ and $T\in \rr_+\setminus J$ can be chosen arbitrarily large, this completes the step.
\vip
{\it Step 2.4.} We next show~\eqref{condp}, i.e. that for all $T>0$, a.s.,
$$
\int_0^T \int_{\rr^3}\int_{\rr^3} |v-v_*|^{\gamma+1} \mu_t(\dd v) \mu_t(\dd v_*)\dd t<\infty. 
$$
If first $\gamma \in [-1,0)$, then $0\leq \gamma+1<2$ and this follows from Step~2.3. If next 
$\gamma \in (-2,-1)$, then $\gamma+1 \in (-1,0)\subset(-2,0)$ and this follows from Step~2.1.

\vip

{\it Step 2.5.} Consider a probability measure $\QQ$ on $\D(\rr_+,\rr^3)$
satisfying~\eqref{cop} and~\eqref{condp}.
Assume that for all
$\phi \in C^2_b(\rr^3)$, all $0\leq s_1\leq\dots\leq s_k\leq s \leq t$, with $s_1,\dots,s_k,s,t \in \rr_+\setminus J$,
all $\phi_1,\dots,\phi_k \in C_b(\rr^3)$, for $\Phi:\D(\rr_+,\rr^3\times \rr^3)\to \rr$ defined by
\begin{equation}\label{def-Phi}
\Phi(x,y)=\prod_{\ell=1}^k \phi_\ell(x(s_\ell))\Big[\phi(x(t))-\phi(x(s))- \int_s^t \bar\cA \phi(x(u),y(u))\dd u\Big],
\end{equation}
we have $\langle \QQ\otimes\QQ,\Phi\rangle = 0$. Then $\QQ$ solves $MP(f_0)$.
\vip
Recall Definition~\ref{dfmp}. The above conditions imply that for all 
$\phi \! \in \! C^2_b(\rr^3)$, all ${\phi_1,\dots,\phi_n \! \in \! C_b(\rr^3)}$, 
all $0\leq s_1\leq\dots\leq s_k\leq s \leq t$, with $s_1,\dots,s_k,s,t \in \rr_+\setminus J$,
\begin{equation}\label{ssc}
\E_\QQ\Big[\prod_{\ell=1}^k \phi_\ell(V^*_{s_\ell}) [M^\phi_t-M^\phi_s]\Big]=0.
\end{equation}
By density of $\rr_+\setminus J$ and since $(V^*_u)_{u\geq 0}$ and $(M^\phi_u)_{u\geq 0}$ are càdlàg,
we conclude that~\eqref{ssc} actually holds true for all $0\leq s_1\leq\dots\leq s_k\leq s \leq t$.
As a conclusion we have 
$$
\E_\QQ[M^\phi_t-M^\phi_s | \sigma(V^*_{u},u \in [0,s])]=0
$$
for all $0\leq s \leq t$: $(M^\phi_t)_{t\geq 0}$ is a martingale under $\QQ$.

\vip
{\it Step 2.6.} We now show that for all $\Phi$ as in
Step~2.4, we a.s. have  $\langle \mu\otimes\mu,\Phi\rangle = 0$.
Since $\mu$ a.s. satisfies~\eqref{cop} and~\eqref{condp} by Steps 2.3 and 2.4, this will prove
(by Step~2.5) that $\mu$ a.s. solves $MP(f_0)$ and this will complete the proof of (ii).

\vip

We first check that, 
\begin{equation}\label{jab1}
\lim_{N\to \infty}\E[\langle\mu^N\odot \mu^N,\Phi\rangle^2]=0.
\end{equation}
We apply the Itô formula to~\eqref{ks} to write 
$\phi(V^{i,N}_t)=\phi(V^{i,N}_s)+ M^{i,N}_{s,t}+O^{i,N}_{s,t}+D^{i,N}_{s,t},$
where
\begin{align*}
M^{i,N}_{s,t}=&\sum_{j\neq i} \int_s^t \int_{\Sp_2}\int_0^\infty \Big[\phi(v'(V^{i,N}_\um,V^{j,N}_\um,\sigma))
-\phi(V^{i,N}_\um)\Big] 
\indiq_{\{z<B(V^{i,N}_\um-V^{j,N}_\um,\sigma)\}}\tilde \Pi^N_{ij}(\dd u,\dd \sigma,\dd z),\\
O^{i,N}_{s,t}=&\sum_{j\neq i} \int_s^t \int_{\Sp_2}\int_0^\infty \Big[\phi(v'_*(V^{i,N}_\um,V^{j,N}_\um,\sigma))
-\phi(V^{i,N}_\um)\Big] 
\indiq_{\{z<B(V^{j,N}_\um-V^{i,N}_\um,\sigma)\}}\tilde \Pi^N_{ji}(\dd u,\dd \sigma,\dd z)
\end{align*}
and
\begin{align*}
D^{i,N}_{s,t}=&\frac 1{2(N-1)}\sum_{j\neq i} \int_s^t \int_{\Sp_2}\Big[\phi(v'(V^{i,N}_u,V^{j,N}_u,\sigma))-\phi(V^{i,N}_u)\\
&\hskip3cm-(v'(V^{i,N}_u,V^{j,N}_u,\sigma)-V^{i,N}_u)\cdot\nabla\phi(V^{i,N}_u)\Big]
B(V^{i,N}_u-V^{j,N}_u,\sigma)\dd \sigma \dd u,\\
&+\frac 1{2(N-1)}\sum_{j\neq i} \int_s^t \int_{\Sp_2}\Big[\phi(v'_*(V^{i,N}_u,V^{j,N}_u,\sigma))-\phi(V^{i,N}_u)\\
&\hskip3cm-(v'_*(V^{i,N}_u,V^{j,N}_u,\sigma)-V^{i,N}_u)\cdot\nabla\phi(V^{i,N}_u)\Big]
B(V^{j,N}_u-V^{i,N}_u,\sigma)\dd \sigma \dd u,\\
&+ \frac b {N-1} \sum_{j\neq i} \int_s^t
|V^{i,N}_u-V^{j,N}_u|^\gamma(V^{j,N}_u-V^{i,N}_u)\cdot \nabla\phi(V^{i,N}_u)\dd u.
\end{align*}
Recalling~\eqref{abar} and using that $v'_*(v,v_*,\sigma)=v'(v,v_*,-\sigma)$, one can check that
\begin{align*}
D^{i,N}_{s,t}=&\frac1{N-1}\sum_{j \neq i}\int_s^t \bar \cA \phi(V^{i,N}_u,V^{j,N}_u) \dd u.
\end{align*}
Consequently,
\begin{align*}
\langle\mu^N\odot \mu^N,\Phi\rangle=&
\frac1{N(N-1)}\sum_{1\leq i \neq j \leq N} \Big(\prod_{\ell=1}^k \phi_\ell(V^{i,N}_{s_\ell})\Big)
\Big[\phi(V^{i,N}_{t})-\phi(V^{i,N}_{s})
- \int_s^t \bar\cA \phi(V^{i,N}_u,V^{j,N}_u)\dd u\Big]\\
=& \frac 1 N \sum_{i=1}^N \Big(\prod_{\ell=1}^k \phi_\ell(V^{i,N}_{s_\ell})\Big)
\frac 1{N-1}\sum_{j\neq i}\Big[\phi(V^{i,N}_{t})-\phi(V^{i,N}_{s})
- \int_s^t \bar\cA \phi(V^{i,N}_u,V^{j,N}_u)\dd u\Big]\\
=&\frac 1 N \sum_{i=1}^N \Big(\prod_{\ell=1}^k \phi_\ell(V^{i,N}_{s_\ell})\Big)
\Big[\phi(V^{i,N}_{t})-\phi(V^{i,N}_{s}) - D^{i,N}_{s,t}\Big]\\
=&A_N+B_N,
\end{align*}
where
\begin{align*}
A_N=\frac 1 N \sum_{i=1}^N \Big(\prod_{\ell=1}^k \phi_\ell(V^{i,N}_{s_\ell})\Big) M^{i,N}_{s,t}
\quad \text{and} \quad B_N=\frac 1 N \sum_{i=1}^N \Big(\prod_{\ell=1}^k \phi_\ell(V^{i,N}_{s_\ell})\Big) O^{i,N}_{s,t}.
\end{align*}
Conditioning with respect to $\cF^N_s$ (where $(\cF^N_u)_{u\geq 0}$ is the canonical filtration generated by
the random variables $V^i_0$ and the Poisson measures $\Pi^N_{ij}$) and
using that the Poisson measures are independent, so that the martingales $M^{i,N}_{s,t}$ are orthogonal,
we get 
\begin{align*}
\E[A_N^2]= \frac{1}{N^2}\sum_{i=1}^N
\E\Big[\Big(\prod_{\ell=1}^k \phi_\ell(V^{i,N}_{s_\ell})\Big)^2\E\Big[(M^{i,N}_{s,t})^2\Big|\cF_{s}^N\Big]\Big]
\leq \frac{C_\Phi}{N^2}\sum_{i=1}^N \E\Big[(M^{i,N}_{s,t})^2\Big]=\frac{C_\Phi}{N}\E\Big[(M^{1,N}_{s,t})^2\Big]
\end{align*}
by exchangeability. But, using again the independence of the Poisson measures,
\begin{align*}
\E\Big[(M^{1,N}_{s,t})^2\Big]=& \frac{1}{2(N-1)}\sum_{j=2}^N \E\Big[\int_s^t \int_{\Sp_2} 
\Big[\phi(v'(V^{1,N}_u,V^{j,N}_u,\sigma))-\phi(V^{1,N}_u)\Big]^2B(V^{1,N}_u-V^{j,N}_u,\sigma)
\dd \sigma \dd u\Big]\\
=& \frac12\E\Big[\int_s^t \int_{\Sp_2} 
\Big[\phi(v'(V^{1,N}_u,V^{2,N}_u,\sigma))-\phi(V^{1,N}_u)\Big]^2B(V^{1,N}_u-V^{2,N}_u,\sigma)\dd \sigma \dd u\Big].
\end{align*}
Using that $\nabla \phi$ is bounded and~\eqref{est4}, we conclude that
$$
\E\Big[(M^{1,N}_{s,t})^2\Big]\leq C_\Phi \E\Big[\int_s^t |V^{1,N}_u-V^{2,N}_u|^{\gamma+2} \dd u\Big].
$$
This last quantity is bounded by~\eqref{tax2}, since $\gamma+2\in(0,2)$.
Thus $\E[A_N^2]$ tends to $0$, and $\E[B_N^2]$ is treated similarly.
We have proved~\eqref{jab1}.

\vip

Next, we introduce, for $A\geq 1$, $\Phi_A$ defined exactly as $\Phi$ in \eqref{def-Phi} but  
with $\bar \cA \phi(x(u),y(u))$ replaced by 
$\chi_A(\bar \cA \phi(x(u),y(u)))$, where $\chi_A(r)=(-A) \lor r \land A$.
The map $\Phi_A : \D(\rr_+,\rr^3\times\rr^3)$ is bounded and continuous at any point 
$(x,y)\in \D(\rr_+,\rr^3\times\rr^3)$ such that $x$ a.s. has no jump at times $s_1,\dots,s_k,s,t$.
Consequently, the map $\MM\mapsto |\langle \MM, \Phi_A\rangle|$
is continuous and bounded at any $\MM \in \cP(\D(\rr_+,\rr^3\times \rr^3))$
such that $\MM(\{(x,y) : \Delta x(s_1)=\dots=\Delta x(s_k)=\Delta x(s)=\Delta x(t)=0\})=1$.
By definition of $J$, $\mu\otimes \mu$ a.s. satisfies this condition, since  $s_1,\dots,s_k,s,t \in \rr_+\setminus J$.
Since $\mu^N \odot\mu^N$ converges in law to $\mu\otimes \mu$, we conclude 
that
\begin{equation}\label{jab2}
\E[|\langle\mu\otimes \mu, \Phi_A \rangle|] = \lim_N \E[|\langle\mu^N\odot \mu^N, \Phi_A \rangle|].
\end{equation}

\vip
We now check that
\begin{gather}
\lim_{A\to \infty}\sup_N \E[|\langle\mu^N\odot \mu^N, \Phi-\Phi_A \rangle|+
|\langle\mu\otimes \mu, \Phi-\Phi_A \rangle|]=0. \label{jab3}
\end{gather}
There exists a constant $C_\Phi$ such that for all $(x,y)\in  \D(\rr_+,\rr^3\times\rr^3)$,
$$
|\Phi(x,y)-\Phi_A(x,y)|\leq C_\Phi \int_s^t |\bar \cA\phi(x(u),y(u))|\indiq_{\{|\bar \cA\phi(x(u),y(u))|\geq A\}} \dd u.
$$
But $|\bar \cA\phi(v,v_*)|\leq C_\phi(|v-v*|^{\gamma+1}+|v-v*|^{\gamma+2})$ by~\eqref{est5p1}.
Hence for any $p>1$,
$$
|\Phi(x,y)-\Phi_A(x,y)|\leq \frac{C_\Phi'}{A^{p-1}}\int_s^t (|x(u)-y(u)|^{\gamma+1}+|x(u)-y(u)|^{\gamma+2})^p \dd u,
$$
and it only remains to show that we can choose $p>1$ such that 
$$
\sup_{N\geq 2} \E[\langle\mu^N\odot \mu^N,\Psi\rangle] + \E[\langle\mu\otimes \mu,\Psi\rangle]<\infty, 
$$
where
$$
\Psi(x,y)=\int_s^t (|x(u)-y(u)|^{\gamma+1}+|x(u)-y(u)|^{\gamma+2})^p \dd u.
$$
By definition of $\mu^N\odot \mu^N$ and by exchangeability, we have
$$
\E[\langle\mu^N\odot \mu^N,\Psi\rangle]= 
\E\Big[\int_s^t (|V^{1,N}_s-V^{2,N}_u|^{\gamma+1}+|V^{1,N}_s-V^{2,N}_u|^{\gamma+2})^p \dd u \Big],
$$
which is indeed bounded, thanks to~\eqref{tax} and~\eqref{tax2} if $p>1$ satisfies 
$-2 < p(\gamma+1)<2$ and $-2<p(\gamma+2)<2$. Since $\gamma \in (-2,0)$, such a $p$ exists.
One shows the finiteness of $\E[\langle\mu\otimes \mu,\Psi\rangle]$ similarly, using
Steps~2.1 and~2.3.

\vip
We finally conclude. We write, for any $N\geq 2$ and and $A\geq 1$,
\begin{align*}
\E[|\langle \mu \otimes \mu,\Phi\rangle|]\leq&
\E[|\langle \mu \otimes \mu,(\Phi-\Phi_A)\rangle|]\\
&+\Big|\E[|\langle \mu \otimes \mu,\Phi_A\rangle |]-\E[|\langle \mu^N \odot \mu^N,\Phi_A\rangle|]\Big| \\
&+ \E[|\langle \mu^N \odot \mu^N,\Phi_A-\Phi\rangle |] \\
&+  \E[|\langle \mu^N \odot \mu^N,\Phi\rangle |].
\end{align*}
Taking the $\limsup$ as $N\to \infty$, the second and last term disappear thanks to~\eqref{jab2} and~\eqref{jab1}.
Taking the $\lim$ as $A\to \infty$, we deduce from~\eqref{jab3}
that $\E[|\langle \mu \otimes \mu,\Phi\rangle|]=0$, which was our goal.
\vip

{\it Step 3.} We finally prove (iii). By (i), we know that the sequence $(\mu^N, N\geq 2)$ is tight in 
$\cP(\D(\rr_+,\rr^3))$. By (ii), we know that any limit point $\mu$ of this sequence a.s.
solves $MP(f_0)$ and satisfies $\sup_{t\geq 0} \E[I_1(\mu_t)]\leq I_1(f_0)$, which implies
that $\E[\int_0^T I_1(\mu_s)\dd s] \leq T I_1(f_0)$ for all $T>0$, so that 
$t\mapsto I_1(\mu_t) \in L^1_{loc}([0,\infty))$ a.s. Hence $\mu=\QQ$ a.s., where $\QQ$ is the unique solution
to $MP(f_0)$ such that $t\mapsto I_1(\QQ_t) \in L^1_{loc}([0,\infty))$, see Proposition~\ref{unimp}.
All this shows that $\mu^N$ goes in law to $\QQ$ as $N\to\infty$. 
The limit $\QQ$ being deterministic, the convergence also holds in probability.
\end{proof}

\section{Entropic propagation of chaos}
\label{sec:propagation}

Finally, we give the 

\begin{proof}[Proof of  Theorem~\ref{th:chaos}] 
We first show that $\QQ(\Delta V^*_t\neq 0)=0$ for all $t\geq 0$,
where we recall that $(V^*_t)_{t\geq 0}$ is the canonical process of $\D(\rr_+,\rr^3)$. By Theorem~\ref{mr2}
and the facts recalled in Subsection~\ref{subsec:propgation-chaos}, we know that
$((\bV^N_t)_{t\geq 0})_{N\geq 2}$ is $\QQ$-chaotic, so that
$(V^{1,N}_t)_{t\geq 0}$ goes in law to $\QQ$ as $N\to \infty$. But the tightness of the family
$((V^{1,N}_t)_{t\geq 0},N\geq 2)$ has been checked through the Aldous criterion, see the proof of Lemma~\ref{tight}.
This implies, see Jacod-Shiryaev~\cite[Section VI, Remark 4.7]{MR1943877}, 
that any limit point of $((V^{1,N}_t)_{t\geq 0},N\geq 2)$ is quasi-left 
continuous, so that in particular $\QQ(\Delta V^*_t\neq 0)=0$ for all $t\geq 0$.

\vip
For each $t\geq 0$, the map $\pi_t : \D(\rr_+,\rr^3) \to \rr^3$ defined by $\pi_t(x)=x(t)$ is continuous
at any point $x$ such that $\Delta x(t)=0$. Thus 
for each $t\geq 0$, the map $\Pi_t : \cP(\D(\rr_+,\rr^3))\to \cP(\rr^3)$ defined by 
$\Pi_t(q)=q\circ \pi_t^{-1}$ is continuous at any point $q$ such that $q(\Delta V^*_t\neq 0)=0$.

\vip

All this implies that for each $t\geq 0$, $(\bV^N_t)_{N\geq 2}$ is 
$\QQ_t$-chaotic.
But we know from Proposition~\ref{ev} that $\QQ_t=f_t$, where
$(f_t)_{t\geq 0}$ is the unique weak solution to~\eqref{be} introduced in Theorem~\ref{thm:exist&uniqBoltz}. 
It only remains to prove that for each $t\geq 0$, $H_N(F^N_t) \to H_1(f_t)$ as $N\to \infty$, 
but this follows from~\cite[Theorem~1.4]{MR3188710}, since $I_N(F^N_t) \le I_N(F^N_0) = I_1(f_0) < \infty$.
\end{proof} 

\bibliographystyle{acm}
\bibliography{bib-chaosBoltzmann}

\end{document}